\pgfmathsetmacro\weight{1/2}
\pgfmathsetmacro\third{1/3}
\pgfmathsetmacro\twothirds{2/3}
\tikzset{degil/.style={
            decoration={markings,
            mark= at position 0.5 with {
                  \node[transform shape] (tempnode) {$/$};
                  }
              },
              postaction={decorate}
}
}
\tikzset{
diagonal fill/.style 2 args={fill=#2, path picture={
\fill[#1, sharp corners] (path picture bounding box.south west) -|
                         (path picture bounding box.north east) -- cycle;}},
reversed diagonal fill/.style 2 args={fill=#2, path picture={
\fill[#1, sharp corners] (path picture bounding box.north west) |- 
                         (path picture bounding box.south east) -- cycle;}}
}
\tikzset{use path/.code=\tikz@addmode{\pgfsyssoftpath@setcurrentpath#1}}
\definecolor{manipulator-color}{RGB}{88,44,44}
\definecolor{manipulator-contour}{rgb}{0.0, 0.18, 0.39}  
\tikzset{>=latex} 
\newtheorem{theorem}{Theorem}[section]
\newtheorem{proposition}[theorem]{Proposition}
\newtheorem{ass}[theorem]{Assumption}
\newtheorem{nnremark}[theorem]{\bf Remark}
\newtheorem{nndefinition}[theorem]{\bf Definition}
\newenvironment{remark}{\begin{nnremark} \rm }{\end{nnremark}}
\newenvironment{definition}{\begin{nndefinition} \rm }{\end{nndefinition}}
\DeclareOldFontCommand{\rm}{\normalfont\rmfamily}{\mathrm}
\DeclareOldFontCommand{\sf}{\normalfont\sffamily}{\mathsf}
\DeclareOldFontCommand{\tt}{\normalfont\ttfamily}{\mathtt}
\DeclareOldFontCommand{\bf}{\normalfont\bfseries}{\mathbf}
\DeclareOldFontCommand{\it}{\normalfont\itshape}{\mathit}
\DeclareOldFontCommand{\sl}{\normalfont\slshape}{\@nomath\sl}
\DeclareOldFontCommand{\sc}{\normalfont\scshape}{\@nomath\sc}
\newcommand\q{\enquote}
\newcommand{\ccat}[3]{{#1\, \underset{#3}{\lozenge}\,{#2}}}
\newcommand{\tm}{\times}
\newcommand \eps {\varepsilon}
\newcommand \N   {\mathbb{N}}
\newcommand \R   {\mathbb{R}}
\newcommand \K   {\mathcal{K}}
\newcommand \Kinf{\mathcal{K_\infty}}
\newcommand \KL  {\mathcal{KL}}
\newcommand \LL  {\mathcal{L}}
\newcommand \PD   {\mathcal{P}}
\newcommand{\T}{\ensuremath{\mathcal{T}}}  
\newcommand{\Uc}{\ensuremath{\mathcal{U}}}
\newcommand{\Vc}{\ensuremath{\mathcal{V}}}
\newcommand{\Sc}{\ensuremath{\mathcal{S}}}
\newcommand{\vertiii}[1]{{\left\vert\kern-0.25ex\left\vert\kern-0.25ex\left\vert #1 
    \right\vert\kern-0.25ex\right\vert\kern-0.25ex\right\vert}}
\newcommand \qrq   {\quad\Rightarrow\quad}
\newcommand \Iff   {\Leftrightarrow}
\newcommand{\normt}[1]{{\left\vert\kern-0.25ex\left\vert\kern-0.25ex\left\vert #1 
		\right\vert\kern-0.25ex\right\vert\kern-0.25ex\right\vert}}
\newcommand \SSet   {\mathcal{S}}
\newcommand{\Simp}{\ensuremath{\mathcal{S}}}
\newcommand{\clo}[1]{\overline{#1}}
\newcommand{\mir}[1]{{\color{red}\bf AM: #1}}     
\newif\ifMath					
\newif\ifEngi					
\newif\ifDFGtext					 
\newif\ifAndo              
\newif\ifExercises					
\newif\ifSolutions          
\newif\ifGerman							
\newif\ifEnglish						
\newif\ifnothabil						
\newif\ifFuture							
\newif\ifConf                    
\newif\ifJournal								 
\newif\ifNOTFORBOOK
\newif\ifFullVersion
\newif\ifExludedDueToSpaceReasons
\newcommand{\einsnorm}[2]{\ensuremath{
    \!\!\;\!\!\!\;
    \left\bracevert\!\!\!\!\!\left\bracevert
    \!
		\ifthenelse{\isempty{#2}}{#1}{#1(#2)}
    \!
      \right\bracevert\!\!\!\!\!\right\bracevert
    \!\!\;\!\!\!\;
  }}
\definecolor{blond}{rgb}{0.98, 0.94, 0.75}
\newlength\mytemplen
\newsavebox\mytempbox
\newcommand\mybluebox{%
    \@ifnextchar[
       {\@mybluebox}%
       {\@mybluebox[0pt]}}
\def\@mybluebox[#1]{%
    \@ifnextchar[
       {\@@mybluebox[#1]}%
       {\@@mybluebox[#1][0pt]}}
\def\@@mybluebox[#1][#2]#3{
    \sbox\mytempbox{#3}%
    \mytemplen\ht\mytempbox
    \advance\mytemplen #1\relax
    \ht\mytempbox\mytemplen
    \mytemplen\dp\mytempbox
    \advance\mytemplen #2\relax
    \dp\mytempbox\mytemplen
    \colorbox{blond}{\hspace{1em}\usebox{\mytempbox}\hspace{1em}}}
\let\origd=\d
\renewcommand*\d{
  \relax\ifmmode
    \mathrm{d}%
  \else
    \expandafter\origd
  \fi
}\makeatother
\newcommand{\pushright}[1]{\ifmeasuring@#1\else\omit\hfill$\displaystyle#1$\fi\ignorespaces}
\newcommand{\pushleft}[1]{\ifmeasuring@#1\else\omit$\displaystyle#1$\hfill\fi\ignorespaces}
\newcounter{syscounter}
\newenvironment{sysnum}{\begin{list}{($\Sigma{\arabic{syscounter}}$)}%
{\settowidth{\labelwidth}{($\Sigma4$)}
\settowidth{\leftmargin}{($\Sigma4$)~}%
\usecounter{syscounter}}}
{\end{list}}
\newcounter{WPcounter}
\newcounter{PRcounter}
\begin{document}
\title{Modeling and stability analysis of live systems with time-varying dimension}

\author{Andrii Mironchenko, \IEEEmembership{Senior Member, IEEE}
\thanks{
The work of A.~Mironchenko is supported by the German Research Foundation (DFG) through the grants MI 1886/2-2 and MI 1886/3-1.
}
\thanks{A. Mironchenko is with 
Department of Mathematics, University of Bayreuth,
95447 Bayreuth, Germany.
Email: andrii.mironchenko@uni-bayreuth.de 
}
}

\maketitle


\begin{abstract}
A major limitation of the classical control theory is the assumption that the state space and its dimension do not change with time. This prevents analyzing and even formalizing the stability and control problems for open multi-agent systems whose agents may enter or leave the network, industrial processes where the sensors or actuators may be exchanged frequently, smart grids, etc.
In this work, we propose a framework of live systems that covers a rather general class of systems with a time-varying state space. We argue that input-to-state stability is a proper stability notion for this class of systems, and many of the classic tools and results, such as Lyapunov methods and superposition theorems, can be extended to this setting.
\end{abstract}

\begin{IEEEkeywords}
Nonlinear systems, modeling, infinite-dimensional systems, input-to-state stability, Lyapunov methods, impulsive systems, multi-agent systems
\end{IEEEkeywords}

\section{Introduction}

Systems theory constitutes a powerful paradigm for the analysis and control of linear and nonlinear systems that can handle the lack of information about the system, acting disturbances, communication constraints, and many further obstructions on the way to the practical implementation of the controllers. 
This astonishing progress was achieved under a foundational structural assumption that goes through the whole body of the mathematical systems theory: the state space does not change in time.


However, this assumption is frequently not fulfilled in natural and human-made systems.
The number of individuals in the populations of organisms changes in time due to the birth and decay processes. 
Plants (considered as a system consisting of repeated units) intermittently create new organs such as leaves, flowers, or bracts. 
Dynamical systems with a variable state space, which we will call \emph{live systems}, appear naturally in control applications.  
An archetypal problem of this kind is designing the optimized adaptive traffic control system so that the controller is viable and scalable even though new roads and cars are entering or leaving the network \cite[Section 4.1]{LAE17}.
A related problem is the organization of a smart grid that ensure robust and effective generation and transport of energy even though the size and topology of the network change due to the attachment and detachment of the microgrids 
to the system \cite{DSB14, FMX11}. 
Another recent problem of this kind is a consensus of multi-agent systems with new agents entering or leaving the network (open multi-agent systems, OMAS) \cite{PiS13,HeM16,FrF21,GaH22,Viz22}.
In social networks, the users may enter and leave communities modifying the group behavior \cite{GrJ12}. 
Last but not least, most large-scale industrial processes are live systems, which are updated or modified from time to time: new sensors may be added, certain actuators can be exchanged, components may become malfunctioning, and subparts of the network may be added or excluded from the system, etc. \cite{Sto09}.

Most current control designs do not take into account the variable dimension of the state space, and thus they are developed only for a model of a system that is valid for a relatively short period. As argued in \cite{Sto09}, the control designs are also predominantly monolithic. That is, after a change of the model of a system, one has to develop a new controller from scratch, which is a costly and time-consuming process.

In computer science, live systems have been investigated in the context of networks of software agents. See \cite{PiS13} for an overview, \cite{MiS18} for results in the context of dynamic networks, \cite{AAF08} in the area of so-called self-stabilizing decentralized algorithms, and \cite{DFG06} in the context of fault-tolerant population protocols. 
The term OMAS seems to have origins in computer science as well; see \cite{MaS01}. Dynamical communities have also been studied within the game theory \cite{LST16}. Statistical methods have been used for the analysis of OMAS in \cite{HeM17}, and simulations were employed to study the dynamic average consensus problem in \cite{ZhM10}. 
Somewhat related (but different) concept of multi-agent systems with variable state space are evolutionary multi-agent systems \cite{BDS15}.

\textbf{Challenges.} Although live systems are omnipresent, few works are devoted to their analysis within the control and dynamical systems theory. 
This is due to several conceptual problems in the modeling of live systems. 
First of all, one needs to go beyond the classical concept of the state space. 
Secondly, the stability concepts need to be revisited. Indeed, if new agents with a magnitude of the state bounded away from zero may enter the network at arbitrarily large times, there is no hope that the reasonably defined state of the system approaches zero (or any other point). Thus, the classical attractivity and asymptotic stability properties may fail to be the proper stability concepts for control systems.

\textbf{Approaches to live systems in control theory.} 
Particular questions in the theory of live systems have been addressed in systems theory. 
The adaptive control theory for nonlinear systems with unknown parameters \cite{KKK95, AsW08} allows us to handle the problem of updating the existing sensors or actuators with similar dynamics but distinct parameters. Yet, the problem of adding new sensors or actuators is outside of the scope of adaptive control. 
Fault-tolerant and reconfigurable control \cite{ZhJ08, BKL06, Pat97} considers the design of controllers that are robust with respect to malfunctions of some of its components (sensors, actuators, observers). However, most of the research is devoted to analyzing some particular types of failures that may occur.
Plug-and-play control envisaged in \cite{Sto09} aims to develop self-configuring control methods that will remain viable if new sensors or controllers are added.


If the maximal number of components is known, one can study the dynamics of such a network in the largest possible state space. The entering or leaving of subsystems to/from the network can thus be modeled by switches in the system's structure \cite{VMN18}. 
In \cite{Ver12}, the concept of pseudo-continuous multi-dimensional multi-mode systems has been introduced that can be understood as a switched system with finitely many modes of a distinct dimension. The change of a dimension is modeled by a switch into the other mode. 
Both in \cite{VMN18} and \cite{Ver12}, the maximal dimension of the system is finite and known in advance.

In some cases, it is possible to overapproximate a large but finite network of a possibly unknown size via an infinite network and transfer the stability results for the infinite overapproximation to any possible subnetwork \cite{NMW22,Mir21}. 

In \cite{FrF21}, the authors consider open discrete-time multi-agent systems. The authors do not assume any a priori upper bound for the dimension of the network. Thus the dimension of the system may well converge to infinity with the time, though it remains finite at any moment. 
The asymptotic stability in \cite{FrF21} is studied in weighted norms, given by the usual norms of finite-dimensional vectors divided by a square root of the dimension of the vector. 
The stability in such weighted norms may be interesting for some applications but can be non-typical for other ones. 

\textbf{Contribution.} 
The main contribution of this paper is to propose a new framework for modeling and stability analysis of live systems. 
We consider a live system as an impulsive system that changes at impulse times not only the value of the state but also the configuration of the system. Each configuration of a live system is a control system in a classical sense, characterized by the state space, input space, and the flow map \cite{KFA69,Wil72,Son98a}. 
Our setting is very general, including open multi-agent systems, systems with unknown dimension, switched systems of variable dimension, etc.

We show that live systems retain the essential features of control systems.
Although the set of all states does not have a linear structure (in particular, the dimension of the state may vary with time), it is possible to introduce a kind of pseudonorm on this set. 

We treat an arrival of an agent as an input to the system.  
This motivates us to analyze the stability of live systems in the sense of input-to-state stability (ISS) introduced in \cite{Son89}, and extended to impulsive systems in \cite{HLT08,DaM13b}. See also \cite{Son08, MiP20} for the survey, and \cite{Mir23} for systematic development of the theory.
It turns out that many characterizations for ISS of infinite-dimensional systems in Banach spaces shown in \cite{MiW18b} are still valid for live systems, as the continuity of trajectories and the specific properties of Banach spaces are not used in the arguments in \cite{MiW18b}. 
Finally, we introduce the concept of ISS Lyapunov functions for live systems based on the corresponding concept from the ISS theory of impulsive systems \cite{HLT08,DaM13b}. And as in impulsive systems, having an ISS Lyapunov function, we can prove the ISS of the live system under certain dwell-time conditions imposed on the density of impulses.



\textbf{Notation.} 
For two sets $X,Y$ denote by $C(X,Y)$ the linear space of continuous functions, mapping $X$ to $Y$.

For the formulation of stability properties, the following classes of comparison functions are useful:
\begin{equation*}
\begin{array}{ll}
{\K} &:= \left\{\gamma:\R_+\rightarrow\R_+\left|\ \gamma\mbox{ is continuous, strictly} \right. \right. \\
&\phantom{aaaaaaaaaaaaaaaaaaa}\left. \mbox{ increasing and } \gamma(0)=0 \right\}, \\
{\K_{\infty}}&:=\left\{\gamma\in\K\left|\ \gamma\mbox{ is unbounded}\right.\right\},\\
{\LL}&:=\left\{\gamma:\R_+\rightarrow\R_+\left|\ \gamma\mbox{ is continuous and strictly}\right.\right.\\
&\phantom{aaaaaaaaaaaaaaaa} \text{decreasing with } \lim\limits_{t\rightarrow\infty}\gamma(t)=0\},\\
{\KL} &:= \left\{\beta:\R_+\times\R_+\rightarrow\R_+\left|\ \beta \mbox{ is continuous,}\right.\right.\\
&\phantom{aaaaaa}\left.\beta(\cdot,t)\in{\K},\ \beta(r,\cdot)\in {\LL},\ \forall t\geq 0,\ \forall r >0\right\}. \\
\end{array}
\end{equation*}

An up-to-date compendium of results concerning comparison functions can be found in \cite{Kel14} and \cite[Appendix]{Mir23}.

\section{Live systems}

As we strive to develop a unified framework of live systems, we start with a rather general yet classical concept of a control system, whose variations have been used in control theory at least since the 1960s \cite{KFA69, Wil72, Son98a}.

\begin{definition}
\label{Steurungssystem}
Consider the triple $\Sigma=(X,\Uc,\phi)$ consisting of 
\index{state space}
\index{space of input values}
\index{input space}
\begin{enumerate}[label=(\roman*)]  
    \item A set $X$ called the \emph{state set}.
    \item An \emph{input set} $\Uc \subset \{u:\R_+ \to U\}$, 
					where $U$ is a set called \emph{set of input values}.
					
We assume that the following two axioms hold:
                    
\emph{The axiom of shift invariance}: for all $u \in \Uc$ and all $\tau\geq0$ the time-shifted input $u(\cdot + \tau)$ belongs to $\Uc$. 

\emph{The axiom of concatenation}: for all $u_1,u_2 \in \Uc$ and all $t>0$ the \emph{concatenation of $u_1$ and $u_2$ at time $t$}, defined by
\begin{equation}
\big(\ccat{u_1}{u_2}{t}\big)(\tau):=
\begin{cases}
u_1(\tau), & \text{ if } \tau \in [0,t], \\ 
u_2(\tau-t),  & \text{ if } \tau > t,
\end{cases}
\label{eq:Composed_Input}
\end{equation}
belongs to $\Uc$.

    \item A map $\phi:D_{\phi} \to X$, with a domain of definition $D_{\phi}\subseteq \R_+ \times X \times \Uc$ (called \emph{transition map}), such that for all $(x,u)\in X \tm \Uc$ it holds that $D_{\phi} \cap \big(\R_+ \times \{(x,u)\}\big) = [0,t_m)\tm \{(x,u)\} \subset D_{\phi}$, for a certain $t_m=t_m(x,u)\in (0,+\infty]$.
		
		The corresponding interval $[0,t_m)$ is called the \emph{maximal domain of definition} of $t\mapsto \phi(t,x,u)$.
		
\end{enumerate}
The triple $\Sigma$ is called a \emph{(control) system}, if the following properties hold:
\index{property!identity}

\begin{sysnum}
    \item\label{axiom:Identity} \emph{The identity property:} for every $(x,u) \in X \times \Uc$
          it holds that $\phi(0, x,u)=x$.
\index{causality}
    \item \emph{Causality:} for every $(t,x,u) \in D_\phi$, for every $\tilde{u} \in \Uc$, such that $u(s) =
          \tilde{u}(s)$ for all $s \in [0,t]$ it holds that $[0,t]\tm \{(x,\tilde{u})\} \subset D_\phi$ and $\phi(t,x,u) = \phi(t,x,\tilde{u})$.
\index{property!cocycle}
        \item \label{axiom:Cocycle} \emph{The cocycle property:} for all
                  $x \in X$, $u \in \Uc$, for all $t,h \geq 0$ so that $[0,t+h]\tm \{(x,u)\} \subset D_{\phi}$, we have
\[
\phi\big(h,\phi(t,x,u),u(t+\cdot)\big)=\phi(t+h,x,u).
\]
\end{sysnum}
If $t_m(x,u)=\infty$ for all $x\in X$ and $u\in\Uc$, we call $\Sigma$ \emph{forward complete}.
\end{definition}

Definition~\ref{Steurungssystem} comprises the most basic and essential properties of control systems. Notably, $X$ and $U$ are merely sets, and we do not require any further structure (linearity, topology, norm, metric, etc.) from these sets. 
This is a typical feature in the early references \cite{KFA69,Wil72,Son98a}. 
The notion of a \emph{state set} (in contrast to state space) we adopt from \cite[Definition 1.1, p. 5]{KFA69}.

Fix any set $S$, which we call \emph{configuration set}. We call its elements \emph{configurations}. 
With each configuration $Q \in S$, we associate a control system $\Sigma_Q:=(X_Q,\Uc,\phi_Q)$.
To avoid the notational complications, we assume that the set of inputs is the same for all systems $\Sigma_Q$, $Q \in S$.

\ifAndo
\mir{There was a comment of a reviewer whether assuming the same input set is a reasonable assumption.}
\fi

\begin{remark}
\label{rem:Labeling} 
In this work, for any $Q \in  S$, we identify any element $y \in X_Q$ with a labeled pair $(Q,y)$. 
However, we drop the labels to simplify the notation.
\end{remark}

We will define a live system $\Sigma$ as a system induced by the family $(\Sigma_Q)_{Q \in  S}$, whose configuration may change with time.
We denote the configuration of $\Sigma$ at time $t$ by $I(t) \in  S$.

\begin{ass}
\label{ass:set of state spaces} 
The changes of the state space or the impulsive changes of the system's state occur at certain time instants given by the increasing infinite sequence $\T:=(t_k)_{k\in\N}$ without accumulation points. We call $\T$ \emph{impulse time sequence}.

Furthermore, the configuration $I(\cdot)$ remains constant between the impulse times, that is $I(t) = I(t_k)$ for $t\in[t_k,t_{k+1})$.
\end{ass}

The transitions between configurations are given by the map $q: S \tm \N \to  S$. 
The change of the configuration of the system $\Sigma$ at impulse times $t_k$ is given by
\begin{eqnarray}
I(t_k):=q(I(t_{k}^-),k),\quad k\in\N.
\label{eq:Configuration-change}
\end{eqnarray}

As the configuration changes, a system trajectory with an initial condition in $X_Q$ leaves $X_Q$ and jumps to the state set of the system in another configuration. This motivates us to consider the following set as the state set for the live system composed of $(\Sigma_Q)_{Q \in  S}$:
\begin{eqnarray}
X:=\bigcup_{Q\subset  S} X_Q.
\label{eq:state-set}
\end{eqnarray}
In view of Remark~\ref{rem:Labeling}, each element of $X$ \q{knows} its configuration. Thus, the union 
in \eqref{eq:state-set} is disjoint in the sense that for any $x \in X$ there is a unique $Q \in  S$ such that $x \in X_Q$. 

From now on, we assume that $\Uc$ is the space of piecewise continuous functions from $\R_+$ to $U$ (with this assumption, the values of inputs are well-defined at the impulse times).

We fix a sequence of impulse times $\T = (t_k)_{k=1}^\infty$, and construct the flow map $\phi$ of a live system $\Sigma$.

Pick any initial configuration $I_0 \in  S$, any initial condition $x \in X_{I_0}$, any sequence of impulse times $\T$, any input $u\in \Uc$. 
As long as the system stays in the initial configuration, we define the dynamics of $\Sigma$ by 
\[
\phi(t,x,u):=\phi_{I_0}(t,x,u),\quad t \in [0, \min\{t_{m,I_0}(x,u),t_1\}),
\]
where $t_{m,I_0}(x,u)$ is the maximal existence time of $\phi_{I_0}(\cdot,x,u) \subset \Sigma_{I_0}$.
If $t_{m,I_0}(x,u) < t_1$, then $t_m(x,u):=t_{m,I_0}(x,u)$, and $\phi(\cdot,x,u)$ is constructed.

Otherwise, define
\[
\phi(t_1,x,u):=g_{I(t_1^-)I(t_1)}(\phi(t_1^-,x,u),u(t_1^-)),
\]
where $g_{Q_1Q_2}:X_{Q_1} \tm U \to X_{Q_2}$ describes the jump of an element from the state set of a configuration $Q_1$ 
to the configuration $Q_2$.
Now the system $\Sigma$ is in a new configuration $I(t_1)$, and its evolution is governed by the flow $\phi_{I(t_1)}$. 
%
\[
\phi(t,x,u):=\phi_{I(t_1)}\big(t-t_1,\phi(t_1,x,u),u(\cdot+t_1)\big), 
\]
for $t\in [t_1,t_1+\min\{t_{m,I(t_1)}(\phi(t_1,x,u),u(\cdot+t_1)),t_2-t_1\})$.

Here $u(\cdot+t_1) \in\Uc$ due to the axiom of shift-invariance. Note that the map $\phi$ also depends on the sequence $\T$. However, we do not explicitly express it in our notation.

Doing this procedure repeatedly, we obtain the flow map
\[
\phi:D_{\phi} \to X, \quad D_{\phi}\subseteq \R_+ \times X \times \Uc,
\]
such that for all $(x,u)\in X \tm \Uc$ there is $t_m=t_m(x,u)\in (0,+\infty]$ such that
\[
D_{\phi} \cap \big(\R_+ \times \{(x,u)\}\big) = [0,t_m)\tm \{(x,u)\} \subset D_{\phi}.
\]

We immediately see that
\begin{proposition}
\label{prop:Properties-Flow-map} 
For each impulse time sequence $\T$, the corresponding triple $\Sigma^\T:=(X,\Uc,\phi)$ is a control system.
\end{proposition}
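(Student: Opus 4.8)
The plan is to check directly that $\phi$ satisfies the three axioms of Definition~\ref{Steurungssystem} — the identity property~\ref{axiom:Identity}, causality, and the cocycle property~\ref{axiom:Cocycle} — using the recursive construction of $\phi$ on the successive intervals $[t_{k},t_{k+1})$. Two structural facts make this work: (i) between two consecutive impulse times the map $t\mapsto\phi(t,x,u)$ is, up to a time shift, the flow $\phi_Q$ of a genuine control system $\Sigma_Q$, so it already enjoys the identity, causality and cocycle properties on that stretch; and (ii) the configuration transitions $q(\cdot,k)$ and the jump maps $g_{Q_1Q_2}$ depend on the input only through its one-sided values $u(t_k^-)$ at the impulse instants. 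One bookkeeping point should be recorded up front for the cocycle property: when the construction is restarted from the state $\phi(t,x,u)$ with input $u(t+\cdot)$, the impulse times and their indices that it uses are those pertaining to $(t,\infty)$, re-based at the origin; following the paper's convention of suppressing the dependence of $\phi$ on these data, this restarted flow is again denoted $\phi$.

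The identity property is immediate: on $[0,\min\{t_{m,I_0}(x,u),t_1\})$ we have $\phi(t,x,u)=\phi_{I_0}(t,x,u)$, hence $\phi(0,x,u)=\phi_{I_0}(0,x,u)=x$ by axiom~\ref{axiom:Identity} for $\Sigma_{I_0}$. For causality I would induct on the number $N$ of impulse times in $[0,t]$. If $N=0$, then $\phi(\cdot,x,u)=\phi_{I_0}(\cdot,x,u)$ on $[0,t]$, and causality of $\Sigma_{I_0}$ delivers both the domain inclusion $[0,t]\times\{(x,\tilde u)\}\subseteq D_\phi$ and $\phi(t,x,u)=\phi(t,x,\tilde u)$. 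For the step, observe that the part of the construction up to and including $t_1$ reads $u$ only through $u|_{[0,t_1)}$ (inside $\phi_{I_0}(\cdot,x,u)$, by causality of $\Sigma_{I_0}$) and through $u(t_1^-)$ (inside $g_{I(t_1^-)I(t_1)}$), while $q$ does not read the input at all; hence $\phi(t_1,x,u)=\phi(t_1,x,\tilde u)$ and the new configuration $I(t_1)$ coincides for $u$ and $\tilde u$. On $[t_1,t]$ the flow is the live-system flow started from $\phi(t_1,x,u)$ with input $u(\cdot+t_1)$, which contains $N-1$ impulses in $[0,t-t_1]$ and on which $u(\cdot+t_1)$ and $\tilde u(\cdot+t_1)$ agree, so the inductive hypothesis closes the argument.

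For the cocycle property fix $(x,u)$ and $t,h\ge0$ with $t+h<t_m(x,u)$ and induct on the number $m$ of impulse times in $(t,t+h]$. Let $t_j$ be the largest impulse time $\le t$ (with $t_j:=0$, $I(t_j):=I_0$ if there is none). If $m=0$, then $t+h$ lies in the stretch $[t_j,\min\{t_{j+1},t_m(x,u)\})$ on which $\phi(\cdot,x,u)=\phi_{I(t_j)}(\cdot-t_j,\phi(t_j,x,u),u(t_j+\cdot))$, and the claimed equality reduces to the cocycle property~\ref{axiom:Cocycle} of $\Sigma_{I(t_j)}$, once one notes that the restarted flow $\phi(\cdot,\phi(t,x,u),u(t+\cdot))$ meets no impulse on $[0,h]$ and starts in configuration $I(t_j)$, so there it equals $\phi_{I(t_j)}(\cdot,\phi(t,x,u),u(t+\cdot))$. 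For the step, let $t_{j+1}$ be the first impulse in $(t,t+h]$ and split $h=(t_{j+1}-t)+(t+h-t_{j+1})$: the case $m=0$ applied on $[t,t_{j+1})$ shows that the pre-jump state $\phi(t_{j+1}^-,x,u)$ equals the left limit at $t_{j+1}-t$ of the restarted flow, and since the one-sided input value and the configuration transition at that instant agree as well, the two post-jump states coincide; applying the inductive hypothesis on $[t_{j+1},t+h]$ (now with $m-1$ impulses and input $u(t_{j+1}+\cdot)$) finishes the argument.

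I expect the cocycle property to be the only genuine difficulty, for two reasons: first, the restart bookkeeping described above (shifting both the impulse times and their indices), which must be pinned down for the cocycle equation to be literally correct; and second, the need to check that the one-sided quantities $\phi(t_k^-,x,u)$ and $u(t_k^-)$ feeding the jump maps are well defined at every impulse instant reached before $t_m(x,u)$ — which follows routinely from the way $t_m(x,u)$ is assembled from the constituent maximal times $t_{m,I(t_k)}$ and from the piecewise continuity of the inputs. The identity property and causality are straightforward.
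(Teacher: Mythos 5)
Your proof is correct and takes essentially the same approach as the paper: the paper disposes of this proposition in a single sentence, asserting that all axioms follow from the corresponding properties of the configuration flows $\phi_Q$ and from the construction of $\phi$, which is exactly what you verify in detail. Your explicit inductions on the number of impulse times and the bookkeeping for the restarted impulse sequence (and the indices fed to $q$ and $g_{Q_1Q_2}$) simply fill in steps the paper leaves implicit.
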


\begin{proof}
All properties follow from the fact that the flow maps for particular configurations satisfy these properties (as ODE systems) and from the construction of the flow $\phi$.
\end{proof}

To introduce the stability concepts of live systems, we need to measure distances. 
Thus, from now on, we assume that all $X_Q$ are normed vector spaces endowed with the corresponding norms $\|\cdot\|_{X_Q}$.
At the same time, $X$ is still only a set. 

However, we can introduce the following concept of pseudonorm
\begin{definition}
\label{def:pseudonorm-on-Xc} 
We introduce the map $\|\cdot\|_{X}:X \to \R_+$ as
\begin{align}
\label{eq:pseudonorm-on-Xc} 
\|x\|_{X}:= \|x\|_{X_Q},\quad x\in X_Q, \ Q \in  S.
\end{align}
We call this map (by abuse of terminology) a \emph{pseudonorm} on $X$.
\end{definition}

As $X$ does not possess a linear structure, there is no triangle property for the map $\|\cdot\|_{X}$. 
However, we still have the following:
\begin{proposition}
\label{prop:pseudonorm-properties} 
The map $\|\cdot\|_{X}$ satisfies the following properties:
\begin{itemize}
	\item $\|x\|_{X}\geq 0$ for all $x \in X$.
	\item $\|x\|_{X}= 0$ if and only if $x$ is a zero element in $X_Q$ for some $Q\in  S$.
	\item For each $x \in X$ and any $\lambda \in\R$ it holds that $\lambda x \in X$, and 
\begin{eqnarray}
\|\lambda x \|_{X} = |\lambda| \| x \|_{X}.
\label{eq:Homogeneity}
\end{eqnarray}
\end{itemize}
\end{proposition}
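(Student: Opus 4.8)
The plan is to reduce each of the three assertions to the corresponding axiom of the genuine norm $\|\cdot\|_{X_Q}$ on the individual configuration spaces. The one structural fact that makes everything work is the observation recorded right after \eqref{eq:state-set}: the union in \eqref{eq:state-set} is disjoint, i.e.\ every $x \in X$ lies in exactly one $X_Q$, so \eqref{eq:pseudonorm-on-Xc} is well-defined and $\|x\|_{X}$ is computed with a single, unambiguously determined norm.

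First I would dispatch the non-negativity: for $x \in X_Q$ we simply have $\|x\|_{X} = \|x\|_{X_Q} \ge 0$ because $\|\cdot\|_{X_Q}$ is a norm. Next, for the definiteness clause, note that $\|x\|_{X} = 0$ if and only if $\|x\|_{X_Q} = 0$, which in turn holds if and only if $x$ is the zero vector of $X_Q$; since the spaces $X_Q$ are disjoint in $X$, their zero vectors are distinct elements of $X$, so this is precisely the statement that $x$ is \emph{a zero element in $X_Q$ for some $Q \in S$}. Finally, for homogeneity, fix $x \in X_Q$ and $\lambda \in \R$; since $X_Q$ is a vector space, the scalar multiple $\lambda x$ --- understood as taken inside $X_Q$, and in the labeled notation of Remark~\ref{rem:Labeling} identified with $(Q,\lambda x)$ --- again lies in $X_Q \subset X$, so that $\|\lambda x\|_{X} = \|\lambda x\|_{X_Q} = |\lambda|\,\|x\|_{X_Q} = |\lambda|\,\|x\|_{X}$ by the positive homogeneity of $\|\cdot\|_{X_Q}$; the degenerate case $\lambda = 0$ gives the zero vector of $X_Q$ and hence $\|0\cdot x\|_{X} = 0 = |0|\,\|x\|_X$.

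There is no genuine obstacle here; the only point that needs care is the bookkeeping around the labeling convention, namely insisting that scalar multiplication of an element of $X$ is always performed within the unique configuration space containing it, so that $\lambda x$ never "escapes" to another $X_{Q'}$ and the pseudonorm of $\lambda x$ is evaluated with the same norm $\|\cdot\|_{X_Q}$. Once this is pinned down, all three items are immediate from the corresponding axioms of $\|\cdot\|_{X_Q}$.
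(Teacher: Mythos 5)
Your proof is correct and follows essentially the same route as the paper's: reduce each item to the corresponding axiom of the norm $\|\cdot\|_{X_Q}$ on the unique configuration space containing $x$, and for homogeneity use that $X_Q$ is a linear space so $\lambda x \in X_Q \subset X$. The extra care you take with the disjointness of the union and the labeling convention is a sensible elaboration of what the paper leaves implicit, but it does not change the argument.
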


\begin{proof}
The first two properties are evident. To see the latter fact, note that for any $x \in X$, there is $Q \in  S$ such that 
$x \in X_Q$. As $X_Q$ is a linear space, $\lambda x \in X_Q \subset X$, and 
\[
\|\lambda x\|_{X} = \|\lambda x\|_{X_Q} = |\lambda| \| x\|_{X_Q}= |\lambda| \| x\|_{X}.
\]
\end{proof}

\section{Stability of live systems and its characterization}

As new agents with a state of magnitude bounded away from zero may enter the system at arbitrarily large times, a system cannot be stable or attractive in the classical sense. At the same time, as we view the arrival of new subsystems as the inputs to our system, it is reasonable to use the concept of input-to-state stability to study the asymptotics of such systems.

From now on, we assume that $\Uc \subset \{u:\R_+ \to U\}$ is a normed vector \emph{space of inputs} 
endowed with a norm $\|\cdot\|_{\Uc}$, where $U$ is a normed vector \emph{space of input values}.
As before, we assume that the $\Uc$ satisfies \emph{the axiom of shift invariance}: for all $u \in \Uc$ and all $\tau\geq0$ the time
shift $u(\cdot + \tau)$ belongs to $\Uc$. But now in addition we assume that \mbox{$\|u\|_\Uc \geq \|u(\cdot + \tau)\|_\Uc$}.

As $X$ is endowed with a map $\|\cdot\|_{X}$ that has many properties of a norm, we can define ISS in a usual way:

\begin{definition}
\label{def:ISS}
\index{stability!input-to-state}
\index{input-to-state stability}
\index{ISS}
For a given sequence $\T$ of impulse times, we call a system $\Sigma^\T:=(X,\Uc,\phi)$ \emph{input-to-state stable (ISS)} if  it is forward complete and there exist $\beta\in\KL,\ \gamma\in\K_{\infty}$, such that for any $x \in X$, all $ u \in \Uc$, and all $t\geq 0$ it holds that
\begin{equation}
\label{eq:ISS}
\|\phi(t,x,u)\|_{X} \leq \beta(\|x\|_{X},t) + \gamma(\|u\|_{\Uc}).
\end{equation}
A live system $\Sigma$ is called \emph{uniformly ISS} over a given set $\Simp$ of admissible sequences of impulse times if $\Sigma^\T$ is ISS for every $\T\in\Simp$, with $\beta$ and $\gamma$ independent of the choice of the sequence from the class $\Simp$.
\end{definition}


Many properties of \q{classic} ISS infinite-dimensional systems (as defined in \cite{MiP20}) can be transferred to the general live systems.

\begin{definition}
\label{def:CIUCS} 
\index{property!convergent input-uniformly convergent state (CIUCS)} 
For a given sequence $\T$ of impulse times, a forward complete system $\Sigma^\T:=(X,\Uc,\phi)$
has a
 \emph{convergent input-uniformly convergent state (CIUCS) property}, 
if for each 
$u\in\Uc$ such that $\lim_{t \to \infty}\|u(t+\cdot)\|_\Uc = 0$, and for any $r>0$, it holds that
\[
\lim_{t \to \infty}\sup_{\|x\|_{X}\leq r}\|\phi(t,x,u)\|_{X} = 0.
\] 
\end{definition}

We denote for short $B_{r,\Uc}:=\{u\in\Uc: \|u\|_{\Uc} \leq r\}$, and $B_{r}:=\{x\in X: \|x\|_{X} \leq r\}$. 

Live systems share the following basic property with input-to-state stable finite-dimensional systems.
\begin{proposition}
\label{prop:Converging_input_uniformly_converging_state}
Every input-to-state stable live system (for a fixed impulse time sequence) has the CIUCS property. 
\end{proposition}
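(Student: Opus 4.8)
The plan is to run the classical two-stage \q{ISS implies asymptotic gain / convergence} argument, now in the pseudonorm $\|\cdot\|_{X}$, using only the cocycle property (available for $\Sigma^\T$ by Proposition~\ref{prop:Properties-Flow-map}), the ISS estimate~\eqref{eq:ISS}, and the algebraic properties of $\|\cdot\|_X$ from Proposition~\ref{prop:pseudonorm-properties}. Fix an impulse time sequence $\T$, assume $\Sigma^\T$ is ISS with $\beta\in\KL$ and $\gamma\in\Kinf$ as in~\eqref{eq:ISS}, and fix $u\in\Uc$ with $\lim_{t\to\infty}\|u(t+\cdot)\|_\Uc=0$ together with $r>0$. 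The first step is to produce a uniform a priori bound on trajectories: for every $\tau\ge 0$ and every $x\in X$ with $\|x\|_X\le r$, applying~\eqref{eq:ISS} and using that $\beta(r,\cdot)\in\LL$ is decreasing and that $\|u\|_\Uc<\infty$ since $\Uc$ is a normed space,
\[
\|\phi(\tau,x,u)\|_X \le \beta(r,\tau)+\gamma(\|u\|_\Uc)\le \beta(r,0)+\gamma(\|u\|_\Uc)=:R<\infty,
\]
where $R$ depends only on $r$ and $u$, and in particular not on $\tau$ or on the choice of $x$.

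The second step is to split the trajectory at an intermediate time using the cocycle property: for $t\ge\tau\ge 0$, since $u(\tau+\cdot)\in\Uc$ by shift invariance and $\Sigma^\T$ is forward complete,
\[
\phi(t,x,u)=\phi\big(t-\tau,\ \phi(\tau,x,u),\ u(\tau+\cdot)\big).
\]
Applying~\eqref{eq:ISS} to the right-hand side, using monotonicity of $\beta(\cdot,t-\tau)$ together with the bound $\|\phi(\tau,x,u)\|_X\le R$, and then taking the supremum over $\|x\|_X\le r$ yields, for all $t\ge\tau$,
\[
\sup_{\|x\|_X\le r}\|\phi(t,x,u)\|_X \le \beta\big(R,\ t-\tau\big)+\gamma\big(\|u(\tau+\cdot)\|_\Uc\big).
\]

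The final step is to tune the two parameters. Given $\varepsilon>0$: because $\|u(\tau+\cdot)\|_\Uc\to 0$ as $\tau\to\infty$ and $\gamma$ is continuous with $\gamma(0)=0$, choose $\tau^\ast$ so large that $\gamma(\|u(\tau^\ast+\cdot)\|_\Uc)<\varepsilon/2$; then, with $\tau^\ast$ fixed, since $\beta(R,\cdot)\in\LL$ there is $T\ge\tau^\ast$ with $\beta(R,t-\tau^\ast)<\varepsilon/2$ for all $t\ge T$. Combining the two bounds gives $\sup_{\|x\|_X\le r}\|\phi(t,x,u)\|_X<\varepsilon$ for all $t\ge T$, which is precisely the CIUCS property of Definition~\ref{def:CIUCS}. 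I do not expect a genuine obstacle: the proof is a direct transcription of the finite-dimensional argument, and the only point requiring attention is that the intermediate bound $R$ be independent of the splitting time $\tau$ — this is exactly what monotonicity of $\beta$ in its second argument provides — so that the same $R$ can be used before fixing $\tau^\ast$.
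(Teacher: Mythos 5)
Your proof is correct and follows essentially the same route as the paper's: use the cocycle property to split the trajectory at a time where the input tail is small, bound the intermediate state uniformly by $R=\beta(r,0)+\gamma(\|u\|_\Uc)$ via the ISS estimate, and then let the $\KL$ term decay. The only cosmetic difference is that you reach the bound $\varepsilon$ directly, whereas the paper applies the cocycle property a third time and concludes with the bound $\beta(2\varepsilon,0)+\varepsilon$ followed by a limiting remark as $\varepsilon\to 0$.
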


\begin{proof}
Let $\Sigma$ be an ISS live system with a gain $\gamma \in\Kinf$.
Pick any $r>0$, any $u\in \Uc$ so that $\lim_{t \to \infty}\|u(\cdot + t)\|_{\Uc} = 0$, and any $\eps>0$. To show the claim of the proposition, we need to show that there is a time $t_\eps=t_\varepsilon(r,u)>0$ so that
\[
\|x\|_{X}\leq r,\quad t\geq t_\eps \qrq \|\phi(t,x,u)\|_{X}\leq\eps.
\]

Choose $t_1>0$ so that 
\begin{align}
\label{eq:Tail-condition}
\|u(\cdot + t)\|_{\Uc}\leq \gamma^{-1}(\eps),\quad t\geq t_1.
\end{align}

Due to the cocycle property and ISS of $\Sigma$, we have for any $x \in B_r$ that
\begin{eqnarray*}
\|\phi(t+t_1,x,u)\|_{X}  &=& \big\|\phi\big(t,\phi(t_1,x,u),u(\cdot + t_1)\big)\big\|_{X} \\
                           &\leq& \beta\big(\|\phi(t_1,x,u)\|_{X},t\big) + \gamma\big(\|u(\cdot+t_1)\|_\Uc\big) \\
                           &\leq& \beta\big(\|\phi(t_1,x,u)\|_{X},t\big) + \eps\\
                           &\leq& \beta\big(\beta(\|x\|_{X},t_1) + \gamma(\|u\|_\Uc),t\big) + \eps\\
                           &\leq& \beta\big(\beta(r,0) + \gamma(\|u\|_\Uc),t\big) + \eps.
\end{eqnarray*}
Pick any $t_2$ in a way that $\beta\big(\beta(r,0) + \gamma(\|u\|_\Uc),t_2\big) \leq \eps$.
This ensures that $\|\phi(t_2+t_1,x,u)\|_{X} \leq 2\eps.$
Using consequently the cocycle property, the ISS of $\Sigma$, and the estimate \eqref{eq:Tail-condition}, we obtain for all $t\geq0$:
\begin{align*}
\|\phi(t+t_2&+t_1,x,u)\|_X \\
&\leq \beta\big(\|\phi(t_2+t_1,x,u)\|_X,t\big) + \gamma\big(\|u(\cdot+t_1+t_2)\|_\Uc\big)\\
&\leq \beta\big(\|\phi(t_2+t_1,x,u)\|_X,t\big) + \eps\\
&\leq \beta\big(2\eps,0\big) + \eps.
\end{align*}
%
Since $\eps>0$ can be chosen arbitrarily small, and since $\beta\big(2\eps,0\big) + \eps \to 0$ as $\eps\to 0$, the claim of the proposition follows.
\end{proof}

For a live system whose only inputs are due to the entering new agents, Propositon~\ref{prop:Converging_input_uniformly_converging_state} 
means that \emph{if a live system is ISS and the magnitude of the incoming agents tends to zero as time goes to infinity, then the system's state converges to zero}.

\section{Characterizations of ISS}

In this section, assuming that the impulse time sequence $\T$ is fixed (known a priori), we state a criterion for ISS of live systems in terms of weaker properties introduced next.

We denote for short $B_{r,\Uc}:=\{u\in\Uc: \|u\|_{\Uc} \leq r\}$, and $B_{r}:=\{x\in X: \|x\|_{X} \leq r\}$. 

\begin{definition}
\label{def:BRS}
\index{bounded reachability sets}
\index{BRS}
We say that \emph{$\Sigma^\T=(X,\Uc,\phi)$ has bounded reachability sets (BRS)}, if for any $C>0$ and any $\tau>0$ it holds that 
\[
\sup\big\{
\|\phi(t,x,u)\|_X : \|x\|_X\leq C,\ \|u\|_{\Uc} \leq C,\ t \in [0,\tau]\big\} < \infty.
\]
\end{definition}

\begin{definition}
\label{def:ULS}
A system $\Sigma^\T=(X,\Uc,\phi)$ is called \emph{uniformly locally stable (ULS)}, if there exist $ \sigma \in\Kinf$, $\gamma
          \in \Kinf \cup \{0\}$ and $r>0$ such that for all $ x \in \clo{B_r}$ and all 
					$u \in \clo{B_{r,\Uc}}$ the trajectory $\phi(\cdot,x,u)$ is defined on $\R_+$, and
\begin{equation}
\label{GSAbschaetzung}
\left\| \phi(t,x,u) \right\|_X \leq \sigma(\|x\|_X) + \gamma(\|u\|_{\Uc}) \quad \forall t \geq 0.
\end{equation}
\end{definition}


%
%

We define the attractivity properties for systems with inputs.
\begin{definition}
\label{def:asymptotic_gain}
\index{AG}
\index{property!asymptotic gain}
A forward complete system $\Sigma=(X,\Uc,\phi)$ has the
\emph{uniform asymptotic gain (UAG) property}, if there
          exists a
          $ \gamma \in \Kinf \cup \{0\}$ such that for all $ \eps, r
          >0$ there is a $ \tau=\tau(\eps,r) < \infty$ such
          that
\begin{align}
\label{eq:bUAG_Absch}
\|u\|_{\Uc}\leq r,&\quad x \in B_{r},\quad t \geq \tau \nonumber\\
&\qrq \|\phi(t,x,u)\|_X \leq \eps + \gamma(\|u\|_{\Uc}).
\end{align}
\end{definition}

\begin{definition}
\label{def:Limit-properties}
We say that a forward complete live system $\Sigma^\T=(X,\Uc,\phi)$ has the \emph{uniform limit property (ULIM)}, if there exists
    $\gamma\in\Kinf\cup\{0\}$ so that for every $\eps>0$ and for every $r>0$ there is a $\tau = \tau(\eps,r)$ such that
\begin{align}
\|x\|_X &\leq r \ \wedge \ \|u\|_\Uc \leq r \nonumber\\
&\qrq \exists t\leq\tau:\  \|\phi(t,x,u)\|_X \leq \eps + \gamma(\|u\|_{\Uc}).
\label{eq:bULIM_ISS_section}
\end{align}
\end{definition}


We will need the following useful result:
\begin{proposition}
\label{prop:KL-fun-lemma-LSW} 
Let $g:\R_+\tm\R_+ \to\R$ satisfy the following properties:
\begin{enumerate}[label=(\roman*)]  
	\item For all $r>0$ and all $\varepsilon>0$ there is $\tau=\tau(\varepsilon,r)>0$ such that 
\begin{eqnarray}
s\leq r \ \wedge \ t\geq \tau  \qrq  g(s,t) \leq \varepsilon.
\label{eq:KL-fun-lemma-LSW-1}
\end{eqnarray}

	\item There is $\sigma_1 \in \Kinf$ and $\delta>0$, such that 
\begin{eqnarray}
s\leq \delta \ \wedge\  t\geq 0 \qrq g(s,t) \leq \sigma_1(s).
\label{eq:KL-fun-lemma-LSW-2}
\end{eqnarray}	

	\item $g$ is bounded on bounded sets.
\end{enumerate}
Then there is $\beta\in\KL$ such that 
\begin{eqnarray}
g(s,t) \leq \beta(s,t)\quad \forall s,t \in\R_+.
\label{eq:KL-fun-lemma-LSW-3}
\end{eqnarray}
\end{proposition}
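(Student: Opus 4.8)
\medskip
\noindent\textbf{Proof idea.}
The plan is to first normalize $g$ to a nonnegative monotone function and then build an explicit $\KL$ majorant by cutting $\R_+\times\R_+$ into a slowly growing ``capture window'' $\{s\le r(t)\}$, where property~(i) makes $g$ small, and its complement, where the crude bound $g(s,t)\le g(s,0)$ together with (ii)--(iii) suffices once multiplied by a weight that decays in $t$. Concretely, I would first replace $g$ by $\max\{g,0\}$ (legitimate, since we only want an upper bound and $\sigma_1\ge 0$, so (i)--(iii) persist), and then by $\widehat g(s,t):=\sup\{g(s',t'):0\le s'\le s,\ t'\ge t\}$, which is nondecreasing in $s$, nonincreasing in $t$, dominates $g$, and is finite (for $t'\ge\tau(1,s)$ one has $g(s',t')\le 1$ by (i) for all $s'\le s$, while on the bounded set $[0,s]\times[0,\tau(1,s)]$ one uses (iii)). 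One checks directly that $\widehat g$ inherits (i) and (iii), that $\widehat g(s,t)\le\sigma_1(s)$ for all $s\le\delta$ and \emph{all} $t\ge 0$, and hence $\widehat g(0,\cdot)\equiv 0$; so it suffices to dominate $\widehat g$ by a $\KL$ function.

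Next I would extract three comparison functions. From (iii) and monotonicity, $s\mapsto\widehat g(s,0)$ is finite, nondecreasing and tends to $0$ as $s\to 0^+$, so there is $\rho\in\Kinf$ with $\widehat g(s,0)\le\rho(s)$. From (i) for $\widehat g$, choose for each $k\in\N$ a time $\tau_k$ with $s\le k,\ t\ge\tau_k\ \Rightarrow\ \widehat g(s,t)\le 1/k$, enlarged so that $0<\tau_1<\tau_2<\cdots\to\infty$; put $r(t):=\max\{k:\tau_k\le t\}$ for $t\ge\tau_1$ and $r(t):=1$ otherwise. Then $r$ is nondecreasing, $r(t)\to\infty$, and $m(t):=\sup_{s\le r(t)}\widehat g(s,t)$ is bounded on $\R_+$ and tends to $0$, so some $\ell\in\LL$ satisfies $m(t)\le\ell(t)$. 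Finally, since $t\mapsto(1+r(t))^{-1}$ is nonincreasing and tends to $0$, some $\lambda\in\LL$ satisfies $(1+r(t))\lambda(t)\ge 1$ for all $t$. (Extracting continuous, strictly monotone majorants of the monotone data $\widehat g(\cdot,0)$, $m$, $(1+r)^{-1}$ is routine comparison-function bookkeeping that I would not belabor.)

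With $\zeta(s):=2s/(\delta+s)\in\K$ and $\rho_2(s):=(1+s)\rho(s)\in\Kinf$, I would then set $\beta(s,t):=\sqrt{\sigma_1(s)\,\ell(t)}+\zeta(s)\,\ell(t)+\rho_2(s)\,\lambda(t)$, which lies in $\KL$ since each summand is a product (or geometric mean) of a $\K$/$\Kinf$ function with an $\LL$ function and finite sums of $\KL$ functions are $\KL$. The inequality $\widehat g\le\beta$ then follows by a three-way case split: if $s>r(t)$, then $(1+s)\lambda(t)>1$, so $\widehat g(s,t)\le\widehat g(s,0)\le\rho(s)\le\rho_2(s)\lambda(t)$; if $s\le r(t)$ and $s\le\delta$, then $\widehat g(s,t)\le\min\{\sigma_1(s),\ell(t)\}\le\sqrt{\sigma_1(s)\ell(t)}$; and if $s\le r(t)$ and $s>\delta$, then $\zeta(s)>\zeta(\delta)=1$, so $\widehat g(s,t)\le m(t)\le\ell(t)<\zeta(s)\ell(t)$. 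Thus $g\le\widehat g\le\beta\in\KL$.

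The step I expect to be the main obstacle is arranging the ingredients so that the single function $\beta$ is genuinely $\KL$: it must vanish at $s=0$ for \emph{every} $t$ — which is exactly why property~(ii) (smallness of $g$ near $s=0$, uniformly in $t$) is indispensable, being the source of the first two summands and of $\widehat g(0,\cdot)\equiv 0$ — and it must decay to $0$ as $t\to\infty$ for \emph{every} fixed $s$, which forces the capture window $\{s\le r(t)\}$ to grow with $t$ at the rate dictated by the times $\tau(1/k,k)$, while property~(iii) prevents $m$ and $\widehat g(\cdot,0)$ from blowing up. Balancing these two opposing requirements, rather than any single estimate, is the crux.
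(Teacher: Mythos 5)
Your proof is correct, but it takes a genuinely different route from the one in the paper. The paper first upgrades (i)+(iii) to a uniform bound $\sup_{s\le r,\,t\ge0}g(s,t)\le\xi_1(r)<\infty$, combines it with (ii) to get a $\K_\infty$ majorant $\sigma$ (citing \cite[Proposition 2.43]{Mir23}), then for each fixed $r$ builds an $\LL$ majorant $\omega(r,\cdot)$ from the geometric sequence $\eps_n=2^{-n}\sigma(r)$ and the times $\tau(\eps_n,r)$, monotonizes in $r$ by setting $\hat\beta(r,t)=\sup_{s\le r}\omega(s,t)$, and finally invokes an external regularization result (\cite[Proposition 9]{MiW19b}) to dominate $\hat\beta$ by a genuine $\KL$ function. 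You instead monotonize $g$ itself up front via the envelope $\widehat g(s,t)=\sup\{g(s',t'):s'\le s,\ t'\ge t\}$ and then exhibit an explicit separable majorant $\sqrt{\sigma_1(s)\ell(t)}+\zeta(s)\ell(t)+\rho_2(s)\lambda(t)$, verified by a three-way case split around the growing capture window $\{s\le r(t)\}$; I checked the cases ($s>r(t)$ via $(1+s)\lambda(t)>1$; $s\le\min\{r(t),\delta\}$ via $\min\{a,b\}\le\sqrt{ab}$; $\delta<s\le r(t)$ via $\zeta(s)>\zeta(\delta)=1$) and they do cover $\R_+^2$ with the claimed inequalities, and each summand is indeed $\KL$. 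The hypotheses play the same roles in both arguments — (i) for decay in $t$, (ii) for smallness near $s=0$ uniformly in $t$ (hence $\beta(0,t)=0$), (iii) for finiteness — but your construction is self-contained modulo routine majorization of monotone data, avoiding the final $\KL$-regularization lemma, and it yields a $\beta$ in explicit sum-of-products form; the paper's version is shorter on the page because it outsources exactly those regularization steps to cited results.
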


\begin{remark}
\label{rem:Counterexample-for-ISW01} 
In \cite[Proposition 2.4]{ISW01} this result was stated without the assumption (iii). 
However, without the condition (iii) Proposition~\ref{prop:KL-fun-lemma-LSW} is not valid.
In particular, consider the function
\begin{eqnarray}
g(r,t) :=
\begin{cases}
\frac{r}{1-t}&,\text{if } r\ge 1, \ t\in[0,1),\\
re^{-t}&,\text{otherwise.} 
\end{cases}
\label{eq:counterex}
\end{eqnarray}
Clearly, this function satisfies the assumption (i) and (ii) of Proposition~\ref{prop:KL-fun-lemma-LSW}. However, the property (iii) does not hold, the function is unbounded on a compact set. In particular, $g$ cannot be bounded from above by a $\KL$-function.

The result in \cite[Proposition 2.5]{LSW96}, to which the authors in \cite{ISW01} refer to (and where a closely related statement was shown), is however correct, but instead of (ii) and (iii) the authors in \cite{LSW96} assume a stronger property that is reminiscent of uniform global stability.
\end{remark}

\begin{proof}
Take any $r>0$. By (i), there is $\tau = \tau(1,r)$ such that 
\begin{eqnarray}
s\leq r \ \wedge\  t\geq \tau \qrq g(s,t) \leq 1.
\label{eq:KL-fun-lemma-LSW-1a}
\end{eqnarray}
At the same, by (iii), for the above $r,\tau$ it holds that
\begin{eqnarray}
\sup_{s \leq r,\ t\leq \tau}g(s,t) <\infty.
\label{eq:KL-fun-lemma-LSW-BRS}
\end{eqnarray}		
Combining it with \eqref{eq:KL-fun-lemma-LSW-1a}, we see that 
\begin{eqnarray}
\sup_{s \leq r,\ t\geq 0}g(s,t) =: \xi_1(r) <\infty
\label{eq:KL-fun-lemma-LSW-2a}
\end{eqnarray}		
holds for some nondecreasing function $\xi_1$. 
Hence, there is $\xi\in\Kinf$ and $c>0$, such that 
\begin{eqnarray}
\sup_{s \leq r,\ t\geq 0}g(s,t) \leq \xi(r) + c,\quad r\geq 0.
\label{eq:KL-fun-lemma-LSW-UGB}
\end{eqnarray}		
The estimate \eqref{eq:KL-fun-lemma-LSW-UGB} together with the property (ii) ensures as in \cite[Proposition 2.43]{Mir23} that there is $\sigma\in\Kinf$ such that 
\begin{eqnarray}
\sup_{s \leq r,\ t\geq 0}g(s,t) \leq \sigma(r),\quad r\geq 0.
\label{eq:KL-fun-lemma-LSW-UGS}
\end{eqnarray}		

We are going to construct a function $\beta \in \KL$ so that \eqref{eq:KL-fun-lemma-LSW-3} holds. 
Fix an arbitrary $r \in \R_+$.

Define $\eps_n:= 2^{-n}  \sigma(r)$, for $n \in \N$. The assumption (i) implies that there exists a sequence of times
$\tau_n:=\tau(\eps_n,r)$, which we may without loss of generality assume
to be strictly increasing, such that
\[
s \leq r \ \wedge \ t \geq \tau_n  \qrq  g(s,t) \leq \eps_n.
\]
From \eqref{eq:KL-fun-lemma-LSW-UGS}, we see that we may set $\tau_0 := 0$.
Define $\omega(r,\tau_n):=\eps_{n-1}$, for $n \in \N$ and $\omega(r,0):=2\eps_0=2\sigma(r)$.

%
%
%

Extend the definition of $\omega$ to a function $\omega(r,\cdot) \in \LL$.
We obtain for all $n=0,1,\ldots$ that
\[
s\leq r \ \wedge \ t \in (\tau_n,\tau_{n+1})  \qrq  g(s,t) \leq \eps_n < \omega(r,t).
\]
Doing this for all $r \in \R_+$, we obtain the definition of the function $\omega$.

Define $\hat \beta(r,t):=\sup_{0 \leq s \leq r}\omega(s,t) \geq
\omega(r,t)$ for $(r,t) \in \R_+^2$. From this definition, it follows that,
for each $t\geq 0$, $\hat\beta(\cdot,t)$ is
increasing in $r$ and $\hat\beta(r,\cdot)$ is decreasing in $t$ for each $r>0$ as
every $\omega(r,\cdot) \in \LL$.
Moreover, for each fixed $t\geq0$, $\hat \beta(r,t) \leq \sup_{0 \leq s \leq r}\omega(s,0)=2\sigma(r)$. This implies that $\hat\beta$ is continuous in the first argument at $r=0$ for any fixed $t\geq0$.

By \cite[Proposition 9]{MiW19b}, $\hat\beta$ can be upper bounded by certain $\tilde{\beta}\in \KL$, and the estimate
\eqref{eq:KL-fun-lemma-LSW-3} is satisfied with such a $\beta$.
\end{proof}


The ISS superposition theorem can be transferred to live systems without significant changes in the formulation:
\begin{theorem}[ISS superposition theorem]
\label{thm:UAG_equals_ULIM_plus_LS}
Let the impulse time sequence $\T$ be fixed, and $\Sigma^\T=(X,\Uc,\phi)$ be a forward complete live control system. The following statements are equivalent:
\begin{enumerate}[label=(\roman*)]
    \item\label{itm:ISS-Characterization-bounded-properties-1} $\Sigma^\T$ is ISS.
	  \item\label{itm:ISS-Characterization-bounded-properties-4} $\Sigma^\T$ is UAG, CEP and BRS.
    \item\label{itm:ISS-Characterization-bounded-properties-5} $\Sigma^\T$ is ULIM, ULS and BRS.
\end{enumerate}
\end{theorem}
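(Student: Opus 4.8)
The plan is to establish the cyclic chain \ref{itm:ISS-Characterization-bounded-properties-1}~$\Rightarrow$~\ref{itm:ISS-Characterization-bounded-properties-4}~$\Rightarrow$~\ref{itm:ISS-Characterization-bounded-properties-5}~$\Rightarrow$~\ref{itm:ISS-Characterization-bounded-properties-1}, following the proof of the ISS superposition theorem for infinite-dimensional systems in \cite{MiW18b}. It transfers here because its arguments use only the identity, causality and cocycle properties of $\Sigma^\T$ (Proposition~\ref{prop:Properties-Flow-map}), the fact that the input norm does not increase under time-shifts, and the positivity and homogeneity of the pseudonorm (Proposition~\ref{prop:pseudonorm-properties}); neither continuity of $t\mapsto\phi(t,x,u)$ nor a linear structure on $X$ enters. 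The implication \ref{itm:ISS-Characterization-bounded-properties-1}~$\Rightarrow$~\ref{itm:ISS-Characterization-bounded-properties-4} is read directly off \eqref{eq:ISS}: BRS holds since $\|\phi(t,x,u)\|_X\le\beta(\|x\|_X,0)+\gamma(\|u\|_{\Uc})$ is bounded on bounded sets of data; CEP holds by continuity at zero of $\beta(\cdot,0)$ and $\gamma$; and UAG holds with the same gain $\gamma$ by choosing, for given $\eps,r>0$, a time $\tau(\eps,r)$ with $\beta(r,\tau)\le\eps$, which exists since $\beta(r,\cdot)\in\LL$.

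For \ref{itm:ISS-Characterization-bounded-properties-4}~$\Rightarrow$~\ref{itm:ISS-Characterization-bounded-properties-5}, ULIM follows from UAG essentially by definition: for given $\eps,r$ take the ULIM time equal to the UAG time $\tau(\eps,r)$ and use the instant $t=\tau(\eps,r)$ as the witness. The derivation of ULS from CEP and BRS is a routine comparison-function construction yielding $\sigma\in\Kinf$ and a radius $r>0$ in \eqref{GSAbschaetzung} (cf.\ \cite[Proposition~2.43]{Mir23}).

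The implication \ref{itm:ISS-Characterization-bounded-properties-5}~$\Rightarrow$~\ref{itm:ISS-Characterization-bounded-properties-1} is the core of the theorem and proceeds in two steps. \emph{Step~1} (ULS, ULIM, BRS $\Rightarrow$ UAG): ULS guarantees that once a trajectory enters a sufficiently small ball it remains forever in a neighborhood whose radius depends only on $\|u\|_{\Uc}$; ULIM guarantees that from any $B_r$ the trajectory reaches such a small ball within a time bounded uniformly over $\|x\|_X\le r$ and $\|u\|_{\Uc}\le r$; iterating these two facts along the trajectory via the cocycle property drives $\|\phi(t,x,u)\|_X$ below $\eps+\gamma(\|u\|_{\Uc})$ for all $t\ge\tau(\eps,r)$, and BRS bounds the finitely many transient pieces so that the resulting $\tau$ is uniform in $x$ and $u$. \emph{Step~2} (UAG, ULS, BRS $\Rightarrow$ ISS): fix a gain $\gamma\in\Kinf\cup\{0\}$ dominating both the UAG and the ULS gains, and set
\[
g(s,t):=\sup\big\{\|\phi(t,x,u)\|_X-\gamma(\|u\|_{\Uc})\ :\ \|x\|_X\le s,\ \|u\|_{\Uc}\le s\big\}.
\]
Then UAG yields hypothesis~(i), ULS yields hypothesis~(ii) (with $\sigma_1$ the ULS $\Kinf$-function), and BRS yields hypothesis~(iii) of Proposition~\ref{prop:KL-fun-lemma-LSW}; hence $g(s,t)\le\beta(s,t)$ for some $\beta\in\KL$. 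Evaluating at $s=\max\{\|x\|_X,\|u\|_{\Uc}\}$ gives $\|\phi(t,x,u)\|_X\le\beta(\max\{\|x\|_X,\|u\|_{\Uc}\},t)+\gamma(\|u\|_{\Uc})$, and the standard splitting $\beta(\max\{a,b\},t)\le\beta(2a,t)+\beta(2b,0)$, followed by absorbing $\beta(2\|u\|_{\Uc},0)$ into the gain, produces \eqref{eq:ISS}.

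I expect the main obstacle to be Step~1, i.e.\ the passage from a limit property plus local stability to the \emph{uniform} asymptotic gain: a priori the number and lengths of the transient segments in the iteration depend on $x$ and $u$, so producing one time $\tau(\eps,r)$ valid for all $x\in B_r$ and all $u$ with $\|u\|_{\Uc}\le r$ needs careful bookkeeping. It is precisely here, and again in Step~2, that BRS --- and not merely forward completeness --- is indispensable, which is why the corrected Proposition~\ref{prop:KL-fun-lemma-LSW}, with its hypothesis~(iii), must replace the flawed \cite[Proposition~2.4]{ISW01}, as the counterexample in Remark~\ref{rem:Counterexample-for-ISW01} shows.
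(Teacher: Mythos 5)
Your proposal follows essentially the same route as the paper: the easy implications are read off \eqref{eq:ISS}, both (ii) and (iii) are funneled into the combination UAG, ULS and BRS by the arguments of \cite{MiW18b} (which transfer because only the cocycle/causality axioms, the shift-monotonicity of $\|\cdot\|_\Uc$ and the pseudonorm properties are used), and the decisive step UAG $\wedge$ ULS $\wedge$ BRS $\Rightarrow$ ISS is carried out exactly as in the paper via the function $g$ and the corrected Proposition~\ref{prop:KL-fun-lemma-LSW}; your definition of $g$ (supremum over $x,u$ at a fixed time rather than over all $t\ge\tau$ as in the paper) is an immaterial variant, since the hypotheses (i)--(iii) of that proposition do not require monotonicity in the second argument.

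One small inaccuracy: in (ii) $\Rightarrow$ (iii) you derive ULS \emph{from CEP and BRS}. These two properties alone do not imply ULS --- e.g.\ $\dot x = x/(1+x^2)$ is CEP and BRS but every nonzero trajectory leaves each bounded set, so \eqref{GSAbschaetzung} fails for all $t\ge 0$. The correct derivation combines CEP (to control a finite transient $[0,\tau]$ near the origin) with UAG (to control the tail $t\ge\tau$), both of which are available under hypothesis (ii); with that repair the argument is complete and matches the paper's.
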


\begin{proof}
The statement (i) easily implies both (ii) and (iii). 

Using the same arguments as in \cite{MiW18b}, one can see that both (ii) and (iii) imply that $\Sigma^\T$ is UAG, ULS and BRS. Assume that the gain $\gamma$ in UAG and ULS properties is the same. Let us show ISS of $\Sigma^\T$.

Define for all $r,\tau \geq 0$ the map
\begin{align*}
g(r,\tau) &:= \sup\big\{ \|\phi(t,x,u)\|_X - \gamma(\|u\|_\Uc): \nonumber\\
&\qquad\qquad\qquad\qquad	 \|x\|_X\leq r,\ \|u\|_\Uc \leq r,\ t \geq \tau \big\}.
\end{align*}
We will show that this map satisfies the assumptions of Proposition~\ref{prop:KL-fun-lemma-LSW}. 

By UAG property, for all $\varepsilon>0$ and all $r>0$ there is $\tau=\tau(r,\varepsilon)>0$ such that for all 
$\|x\|_X\leq r$ and all $\|u\|_\Uc \leq r$ we have that 
\begin{align}
\label{eq:aux-UAG}
t\geq \tau \qrq \|\phi(t,x,u)\|_X \leq \varepsilon + \gamma(\|u\|_\Uc).
\end{align}
This shows that \eqref{eq:KL-fun-lemma-LSW-1} holds.
BRS of $\Sigma$ ensures that 
\[
\sup\big\{ \|\phi(t,x,u)\|_X : \|x\|_X\leq r,\ \|u\|_\Uc \leq r,\ t \in[0,\tau] \big\}<\infty.
\]
Combined with \eqref{eq:aux-UAG}, we see $g(r,\tau)$ is well-defined and finite for any $r,\tau$, and \eqref{eq:KL-fun-lemma-LSW-3} holds.

As $\Sigma^\T$ is ULS, there exists $\sigma\in\Kinf$ and $\delta>0$ such that whenever $\|x\|_X \leq\delta$ and $\|u\|_{\Uc} \leq \delta$, we have
\begin{equation*}
t\geq 0 \qrq \|\phi(t,x,u)\|_X - \gamma(\|u\|_\Uc) \leq \sigma(\|x\|_X).
\end{equation*}

This shows the property \eqref{eq:KL-fun-lemma-LSW-2}. 
According to Proposition~\ref{prop:KL-fun-lemma-LSW}, there is $\beta\in\KL$ such that \eqref{eq:KL-fun-lemma-LSW-3} holds. 

Now pick any $x \in X$, any $u\in\Uc$ and any time $t\geq 0$.
Taking $r:=\max\{\|x\|_X,\|u\|_\Uc\}$, we obtain that 
\begin{align*}
\|\phi(t,x,u)\|_X - \gamma(\|u\|_\Uc) &\leq g(\max\{\|x\|_X,\|u\|_\Uc\},t)\\
&\leq \beta(\max\{\|x\|_X,\|u\|_\Uc\},t).
\end{align*}
Thus:
\begin{align*}
\|\phi(t,x,u)\|_X 
&\leq \beta(\|x\|_X,t) + \beta(\|u\|_\Uc,t) + \gamma(\|u\|_\Uc)\\
&\leq \beta(\|x\|_X,t) + \beta(\|u\|_\Uc,0) + \gamma(\|u\|_\Uc).
\end{align*}
This shows the ISS of $\Sigma^\T$.
\end{proof}

\section{Special classes of live systems}

To see the generality of our formalism, let us consider some special scenarios.

\subsection{Open multi-agent impulsive systems}
\label{sec:Open multi-agent systems}

		Open multi-agent systems are multi-agent systems with a time-varying number of agents, which may also grow to infinity with time. 
		Let $S_0$ be a (finite or infinite) set, which we call \emph{index set}, that labels all possible components of the network. 
Let for each $i\in S_0$ a Euclidean space $X_i$ endowed with the norm $\|\cdot\|_{X_i}$ be 
the state space of the $i$-th agent.

As a configuration set, we take a certain subset $ S$ of all finite nonempty subsets of $S_0$. 
Let $Q \in S$. In what follows, we denote by $(x_j)_{j \in Q}$ the vector consisting of $x_j \in X_j$ for $j\in Q$. 
Define the vector space
\begin{equation}
\label{eq:X_Q}
 \hspace{-2mm} X_Q := \bigtimes_{j\in Q}X_j.
\end{equation}
and endow it with a certain norm $\|\cdot\|_{X_Q}$ making $X_Q$ a (real) normed linear space. 

We define the state set $X$ for the OMAS $\Sigma$ by \eqref{eq:state-set}.

As before, for each time instant $t$, we denote the configuration of the system at time $t$ by $I(t) \in  S$.
The state space of the system at time $t$ is thus given by $X_{I(t)} \in X$.
Also, we take $\T$ as in Assumption~\ref{ass:set of state spaces}.

Let for all $t \in\R_+\backslash \T$ the dynamics of the system be given by the following equations: 
\begin{align}
\label{eq:Equations of motion}
\Sigma:\quad \dot{x}_i(t)	&= f_i(x(t),u_i(t)),\quad i \in I(t),\quad t>0.
\end{align}
Here $f_i:X \tm \R^{m_i} \to X_i$ is a map defined on the whole state set. 
We assume that $u_i \in L_\infty(\R_+,\R^{m_i})$, for all $i\in S_0$, and we define the total input to the system $\Sigma$ as 
$u:=(u_i)$, with $\|u\|_\Uc:=\sup_{i\in S_0}\|u_i\|_\infty$.
For the simplicity of notation, we also assume that all the signals $u_i$ are right-continuous.

\begin{remark}
\label{rem:f_i-dependence-configuration} 
As we implicitly label the states of $X_Q$ for any configuration $Q \in S$, $f_i$ depends both on the configuration of the system and the state of the system in this configuration.
\end{remark}

As for any $k\in\N$ and any $t \in (t_k,t_{k+1})$, it holds that $x(t) \in X_{I(t_k)}$, the dynamics of the system $\Sigma$
can be equivalently represented for $t\in (t_k,t_{k+1})$ by 
\begin{align}
\label{eq:Equations of motion-config}
\dot{x}(t)	&= f_{I(t_k)}(x(t),u(t)) :=(f_i(x(t),u_i(t)))_{i \in I(t_k)}.
\end{align}
Now $f_{I(t_k)}$ acts on $X_{I(t_k)} \tm \Uc$, and thus \eqref{eq:Equations of motion-config} is a \q{usual} ODE representing the dynamics of the overall system on $(t_k,t_{k+1})$. We denote the system $\Sigma$ in the configuration $I(t)$ by $\Sigma_{I(t)}$. The state space of this configuration is $X_{I(t)}$. 
We understand the solutions of \eqref{eq:Equations of motion-config} in the sense of Caratheodory. 

\begin{ass}
\label{ass:OMAS-well-posedness} 
We suppose that $\Sigma_Q$ is well-posed for any configuration $Q$. That is, for each initial condition $x_0 \in X_{I(t_k)}$ and any $u\in\Uc$, there exists a unique maximal solution of \eqref{eq:Equations of motion-config} with $x(t_k)=x_0$ on $[t_k,\tau)$ for some $\tau\in(t_k,t_{k+1})$. 
We denote this solution as $\hat\phi(\cdot,t_k,x_0,u)$.
\end{ass}

At impulse times, the configuration may change as some subsystems may leave the system, and others may enter the system. 
We denote by $D(t_k) \subset I(t_k^-)$ the set of indices of all subsystems that are leaving at time $t_k$, 
and by $B(t_k)$ the indices of  subsystems that enter the system at time $t_k$. 

\begin{ass}
\label{ass:OMAS-impulsive-times} 
We assume that 
\[
B(t_k) \cap I(t_k^-) = \emptyset.
\] 
That is, only subsystems which were not a part of the network immediately prior to time $t_k$ can join the system.

Thus, at impulse times $t_k$, $k\in\N$, we have that 
\[
I(t_k) = \big(I(t_k^-)\cup B(t_k)\big) \backslash D(t_k).
\]
We assume that $I(t_k) \in S$ for all $k\in\N$, which means that the configuration of the system remains admissible for all times.
\end{ass}

At impulse time $t_k$, the subsystems with indices belonging to $B(t_k)$ are entering the network $\Sigma$. We treat their initial conditions  as an input to the system:
\begin{align}
\label{eq:Impulsive-dynamics-new-states}
x_i(t_k) = u_i(t_k),\quad i \in B(t_k).
\end{align}
The states that remain in the network may jump instantaneously at time $t_k$:
\begin{align}
\label{eq:Impulsive-dynamics-old-states}
x_i(t_k) = g_i(x(t_k^-),u(t_k^-)),\quad i \in I(t_k^-)\backslash D(t_k).
\end{align}

We treat the initial states of new subsystems entering or leaving the network as an (impulsive) input to the system, similar to instantaneous changes in the states of subsystems within the network.

OMAS, as defined previously, constitute an important special case of live systems. On the other hand, this subclass is still quite general. In particular, if there is only one agent in the network, and thus the configuration does not change with time, we obtain a classical impulsive system. Other special cases of OMAS are systems with an unknown initial configuration and switched systems.

\subsection{Systems with an unknown initial configuration}
 
All our previous considerations are also valid for the empty time sequence $\T$. In this case, the system remains in the initial configuration for any initial condition and at the whole interval of existence. However, the initial configuration of the system is not fixed, and thus such a live system $\Sigma$ can be interpreted as a \q{classical} system with an unknown initial configuration.
Furthermore, $\Sigma$ is ISS if and only if each configuration $\Sigma_Q$, $Q\subset S_0$ is ISS, and the corresponding functions $\beta_Q$ and $\gamma_Q$ can be chosen uniformly in $Q$. 
If $S_0$ is a finite set, then clearly, $\Sigma$ is ISS iff each $\Sigma_Q$ is ISS.

\subsection{Switched systems} 

Let the index set $S_0$ be given. Let also $p\in\N$, and a collection of matrices $\{A_j \in \R^{p\tm p}:j\in S_0\}$ be given, together with a piecewise right-continuous signal $\sigma: \R_+\to S_0$, with at most an infinite number of discontinuities without accumulation points. A linear switched system is a system of the form
\[
\dot{x} = A_{\sigma(t)}x,\quad t>0.
\]
\emph{A switched system can be viewed as a live system with replacements and configurations of the same dimension}, where only one component is active at each instant, and the arrival of a new node coincides with the leaving of the old one. A switch is thus an impulse at which one component enters the system and another leaves the system.

Indeed, we can set $ S:=\{ \{j\}: j\in S_0\}$. 
Take $\T$ as a monotonically increasing to infinity sequence of discontinuities of $\sigma$. 
Then $I(0):=\sigma(0)$, $B(t_k):=\sigma(t_k)$, $D(t_k):=\sigma(t_k^-)$, and thus $I(t_k)=B(t_k)$.

\section{Lyapunov methods}

The concept of ISS Lyapunov functions can be naturally extended to live systems:
\begin{definition}
\index{ISS-Lyapunov function!impulsive systems}
\label{eq:ISS_Imp_LF}
A continuous function $V:X \to \R_+$ is called an \emph{ISS-Lyapunov function} for $\Sigma$ if $\exists \ \psi_1,\psi_2 \in \Kinf$, such that
\begin{equation}
\label{LF_ErsteEigenschaft}
\psi_1(\|x\|_X) \leq V(x) \leq \psi_2(\|x\|_X), \quad x \in X
\end{equation}
and $\exists \chi \in \Kinf$, $\alpha \in \PD$ and continuous function $\varphi:\R_+ \to \R$ with $\varphi(x)=0$ $\Iff$ $x=0$,  such that
$\forall x \in X$, $\forall \xi \in U$
it holds
\begin{equation}
\label{ISS_LF_Imp}
V(x)\geq\chi(\|\xi\|_U)\Rightarrow
\left \{
\begin {array} {l}
{ \dot{V}_u(x) \leq - \varphi(V(x))} \\
{ V(g(x,\xi))\leq  \alpha(V(x)),}
\end {array}
\right.
\end{equation}
for all $u \in \Uc$ with $u(0)=\xi$.
For a given input $u \in \Uc$, the Lie derivative is defined by
\begin{equation}
\label{LyapAbleitung_2}
\dot{V}_u(x)=\mathop{\overline{\lim}} \limits_{t \rightarrow +0} {\frac{1}{t}\big(V(\phi(t,x,u))-V(x)\big) }.
\end{equation}

If in addition
\begin{equation}
\label{Lyap_ExpFunk}
\varphi(s) = c s \mbox{ and } \alpha(s) = e^{-d} s
\end{equation}
for some $c,d \in \R$, then $V$ is called \emph{exponential ISS-Lyapunov function with rate coefficients} $c,d$.
\end{definition}

For any given impulse time sequence, the configuration of $\Sigma$ does not change for small enough times. Thus $\dot{V}_u(x)$ does not depend on the sequence of impulse times.

For a given sequence of impulse times, denote by $N(t,s)$ the number of jumps within the interval $(s,t]$.

Along the lines of \cite[Theorem 5]{DaM13b}, we have the following result
\begin{theorem}
\label{ExpCase}
Let $V$ be an exponential ISS-Lyapunov function for $\Sigma$ with corresponding coefficients $c \in \R$, $d \neq 0$. For arbitrary function $h:\R_+ \to(0,\infty)$, for which there exists $g \in \LL$: $h(x) \leq g(x)$ for all $x \in \R_+$
consider the class $\SSet[h]$ of impulse time-sequences, satisfying the generalized average dwell-time (gADT) condition:
\begin{equation}
\index{dwell-time condition!generalized average}
\index{generalized ADT}
\label{Dwell-Time-Cond}
-dN(t,s) - c(t-s) \leq  \ln h(t-s), \quad \forall t\geq s \geq t_0.
\end{equation}
Then the system $\Sigma$ is uniformly ISS over $\SSet[h]$.
\end{theorem}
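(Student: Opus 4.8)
The plan is to adapt the classical ISS-Lyapunov argument for impulsive systems (as in \cite{DaM13b,HLT08}) to the live-systems setting, exploiting that the only place where the classical proof uses the linear structure or topology of the state space is in manipulating the scalar quantity $V(\phi(t,x,u))$, which is perfectly meaningful here because $V:X\to\R_+$ is assumed continuous and sandwiched between $\Kinf$ functions of the pseudonorm. First I would fix an admissible sequence $\T\in\SSet[h]$, an initial configuration $I_0$, an initial state $x\in X_{I_0}$, and an input $u\in\Uc$, and abbreviate $y(t):=\phi(t,x,u)$ and $v(t):=V(y(t))$. Splitting $\R_+$ into the intervals $(t_k,t_{k+1}]$ on which the configuration is constant, on each such interval the flow is governed by a single configuration $\Sigma_{I(t_k)}$, so the flow-derivative estimate in \eqref{ISS_LF_Imp} together with $\varphi(s)=cs$ gives the differential inequality $\dot v(t)\le -c\,v(t)$ on $(t_k,t_{k+1}]$ whenever $v(t)\ge\chi(\|u\|_\Uc)$, and at the jump times the second line of \eqref{ISS_LF_Imp} with $\alpha(s)=e^{-d}s$ gives $v(t_k)\le e^{-d}v(t_k^-)$, again under the same gate condition on $v(t_k^-)$ (here one uses that $u$ is piecewise continuous so $\xi:=u(t_k^-)$ is well defined, and that the restriction of the whole input to $[0,t]$ is what feeds each configuration, so $\|u(\cdot+t_k)\|_\Uc\le\|u\|_\Uc$ by the new shift-monotonicity axiom).

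The core estimate is the standard one: as long as $v(s)\ge\chi(\|u\|_\Uc)$ on $[0,t]$, concatenating the continuous decay $e^{-c(t-s)}$ over the flow intervals with the discrete decay $e^{-d}$ at each of the $N(t,s)$ jumps in $(s,t]$ yields
\[
v(t)\ \le\ e^{-dN(t,0)-ct}\,v(0)\ \le\ h(t)\,v(0),
\]
where the last inequality is exactly the gADT condition \eqref{Dwell-Time-Cond} with $s=0$ (note $t_0=0$ in our normalization), after exponentiating. Because $h\le g$ for some $g\in\LL$, the product $h(t)v(0)\le g(t)\psi_2(\|x\|_X)$ defines a $\KL$-type bound. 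The only subtlety, handled exactly as in the impulsive ISS literature, is what happens once the gate fails, i.e.\ when $v$ first drops to the level $\chi(\|u\|_\Uc)$: one shows $v$ cannot subsequently exceed $\alpha(\chi(\|u\|_\Uc))$-type quantities, because each time $v$ would try to rise above $\chi(\|u\|_\Uc)$ the gate reactivates and forces $\dot v\le -cv$ and a contraction at jumps; a short bookkeeping argument (splitting the trajectory at the last time $v$ equals $\chi(\|u\|_\Uc)$ and using that on the ``bad'' set $v$ stays below a fixed $\Kinf$ image of $\chi(\|u\|_\Uc)$, whose size is controlled uniformly using that $h\le g\le g(0)$ is bounded) gives $v(t)\le\max\{h(t)\psi_2(\|x\|_X),\ \tilde\gamma(\|u\|_\Uc)\}$ for a suitable $\tilde\gamma\in\Kinf$ that depends only on $\chi$, $d$, and $\sup g$, hence \emph{not} on $\T\in\SSet[h]$.

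Converting the $V$-level estimate back to the pseudonorm using \eqref{LF_ErsteEigenschaft} gives
\[
\|y(t)\|_X\ \le\ \psi_1^{-1}\!\big(2h(t)\psi_2(\|x\|_X)\big)\ +\ \psi_1^{-1}\!\big(2\tilde\gamma(\|u\|_\Uc)\big),
\]
and since $t\mapsto\psi_1^{-1}(2g(t)\psi_2(r))$ is a $\KL$ function of $(r,t)$ dominating the first term, one reads off the ISS estimate \eqref{eq:ISS} with $\beta$ and $\gamma$ built solely from $\psi_1,\psi_2,c,d,\chi$ and $g$ — independent of the particular sequence in $\SSet[h]$, which is precisely uniform ISS over $\SSet[h]$. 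Forward completeness over this class follows because the above bound precludes finite-time blow-up of $v$, hence of $\|y(t)\|_X$, on any bounded interval, and each configuration is well-posed. The main obstacle I anticipate is purely bookkeeping rather than conceptual: carefully tracking the ``gate on/gate off'' switching of the condition $v\ge\chi(\|u\|_\Uc)$ across both flow and jump phases and extracting constants that are genuinely uniform in $\T$ — this is where one must be careful that the bound $h\le g\in\LL$ (and not just $h$ itself) is what delivers both the decay and the uniform boundedness needed for the asymptotic-gain part.
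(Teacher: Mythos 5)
Your overall route is the one the paper itself takes: Theorem~\ref{ExpCase} is stated \q{along the lines of} the gADT theorem for impulsive systems in \cite{DaM13b}, and no independent proof is given in the text, so reconstructing that argument --- the gate-on bookkeeping $v(t)\le e^{-dN(t,0)-ct}v(0)\le h(t)v(0)\le g(t)\psi_2(\|x\|_X)$, a separate analysis after the first gate failure, and uniformity over $\SSet[h]$ extracted from the single majorant $g\in\LL$ --- is exactly the intended proof. Your observations that the argument never uses linearity of the state set or continuity of the flow, only the scalar function $t\mapsto V(\phi(t,x,u))$ together with the sandwich \eqref{LF_ErsteEigenschaft}, and that the bound $h\le g\le g(0)$ is what controls excursions above the gate when $c<0$ or $d<0$, are the right points of emphasis.

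There is, however, one step in your \q{gate off} bookkeeping that does not follow from the stated hypotheses: the behavior at an impulse time $t_k$ at which $V(\phi(t_k^-,x,u))<\chi(\|u(t_k^-)\|_U)$. The conditions \eqref{ISS_LF_Imp} are implications gated by $V(x)\ge\chi(\|\xi\|_U)$; when the gate is off they impose no constraint whatsoever on $V(g(x,\xi))$, so your claim that on the bad set $v$ stays below a fixed $\Kinf$ image of $\chi(\|u\|_\Uc)$ is not justified --- a jump could in principle send a state with arbitrarily small $V$ to a state with arbitrarily large $V$. Your continuous-time argument is fine (along the flow, $v$ can only re-enter the gate-on region through the level $\chi(\|u\|_\Uc)$, and restarting the gADT estimate at the crossing time yields the uniform bound $g(0)\,\chi(\|u\|_\Uc)$); it is only the discrete re-entry that is uncontrolled. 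To close this gap one must either strengthen the jump condition to a form valid for all $(x,\xi)$, e.g.\ $V(g(x,\xi))\le\max\{e^{-d}V(x),\tilde\chi(\|\xi\|_U)\}$ --- which is what actually holds in the paper's illustrative example, where the jump adds $u^TPu\le\lambda_{\max}(P)|u|^2$ to $V$ --- or record it as an explicit additional hypothesis. The same caveat applies to the implication-form statement inherited from the impulsive-systems literature, so this is a gap you share with the cited source rather than a defect of your adaptation to live systems.
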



\subsection{Illustrative example}

Consider a cascade interconnection of a nonlinear scalar system
\begin{eqnarray}
\dot{z} = -z^3 + \|x\|_{X_Q}
\label{eq:Nonlinear-coupling}
\end{eqnarray}
with a live system consisting of an finite (but without a priori bound) number of identical exponentially stable subsystems of the form
\begin{align}
\label{eq:identical-agent}
\dot{x}_i	&= Ax_i,
\end{align}
where $i\in\N$, $A \in \R^{s\tm s}$ is a Hurwitz matrix, and $x_i \in \R^s$ for a certain $s \in \N$.

Thus, $S_0:=\N$, $X_i := \R^s$ for all $i\in S_0$, and we endow each $X_i$ with Euclidean norm. 
Now we set for each $Q \subset S_0$ 
\[
\|x\|_{X_Q}:= \Big(\sum_{j\in Q}|x_j|^2\Big)^{1/2},\quad x \in X_Q.
\]
Although we assume that the state of the system at each time belongs to a finite-dimensional space, the dimension of the state may grow to infinity. Thus, the choice of the norm on each $X_Q$ is crucial, even though the norms in finite-dimensional spaces are all equivalent.

Now consider an impulse time sequence $\T:=(t_k)_{k\in\N}$, and assume that at each time $t_k$ one new agent of the form \eqref{eq:identical-agent} is entering the system.
The initial state of the newly added agent at time $t_k$ is $u(t_k)$.

We are going to study the ISS of the system \eqref{eq:identical-agent}-\eqref{eq:Nonlinear-coupling}.

We start by analyzing the system \eqref{eq:identical-agent}. 
 As $A$ is Hurwitz, there is a positive definite matrix $P=P^T \in\R^{s \tm s}$ such that the Lyapunov inequality holds:
\[
PA+A^TP \leq -I.
\]

For any state $x \in X$, let $Q=Q(x) \subset S_0$ be such that $x \in X_Q$. 
Define the ISS Lyapunov function candidate as  
\begin{align}
\label{eq:ISS-LF-simple-quadratic}
V(x)	&:= \sum_{j\in Q} x_j^TPx_j,\quad x \in  X.
\end{align}
Clearly, the following sandwich estimates hold:
\begin{align*}
\lambda_{\min}(P)\|x\|^2_{ X} &= \lambda_{\min}(P)\|x\|^2_{X_Q} \\
														  &\leq V(x) \leq \lambda_{\max}(P)\|x\|^2_{X_Q} = \lambda_{\max}(P)\|x\|^2_{ X},
\end{align*}
where $\lambda_{\min}(P), \lambda_{\max}(P)>0$ are the minimal and the maximal eigenvalue of the positive definite matrix $P$ respectively.

Take any $Q \subset S_0$ and any $x \in X_Q$.
Computing the Lie derivative of $V$ along the trajectories, we see that 
\begin{align}
\label{eq:ISS-LF-estimate-continuous-dynamics}
\dot{V}(x)	&= \sum_{j\in Q} x_j^T(PA + A^TP)x_j \nonumber\\
&\le - \sum_{j\in Q} x_j^Tx_j = -\|x\|^2_{ X} \leq -\frac{1}{\lambda_{\max}(P)}V(x).
\end{align}

Consider the impulsive dynamics.
At each time $t_k$, a new subsystem enters a system $\Sigma$, with the initial state $u(t_k)$, and thus the new state is
\[
x(t_k):=(x(t_k^-),u(t_k)).
\]
Thus, the dynamics of a Lyapunov function candidate $V$ at impulse times is
\begin{align*}
V((x,u))	&= V(x) + u^TPu \leq V(x) + \lambda_{\max}(P) |u|^2.
\end{align*}
This implies that 
\begin{eqnarray}
\label{eq:ISS-LF-estimate-impulsive-dynamics}
\lambda_{\max}(P) |u|^2 \leq \varepsilon V(x) \qrq  V((x,u)) \leq (1+\varepsilon)V(x).
\end{eqnarray}

Taking $c:= \lambda_{\max}(P)^{-1}$, $d:=-\ln(1+\varepsilon)$, and any $h$ as in the formulation of 
Theorem~\ref{ExpCase}, Theorem~\ref{ExpCase} ensures UISS of our system over the class of impulse time sequences $\SSet[h]$ satisfying 
\eqref{Dwell-Time-Cond}. 

The system \eqref{eq:Nonlinear-coupling} is ISS with $\|x\|_{X_Q}$ as an input, which can be seen by using the Lyapunov function $z \mapsto z^2$. Thus, \eqref{eq:identical-agent}-\eqref{eq:Nonlinear-coupling} is a cascade interconnection of ISS (uniformly over class of impulse time sequences $\SSet[h]$) systems, and thus it is ISS (uniformly over class of impulse time sequences $\SSet[h]$) using the same arguments as in the classical finite-dimensional case.

Note that there are impulse time sequences such that the system is not ISS with respect to these impulse time sequences. In fact, by adding new states to the system \eqref{eq:identical-agent} at an increasing rate, it is possible to let the state of the system \eqref{eq:identical-agent} converge to infinity as time goes to infinity.

\section{Discussion}

We have presented a formalization of control systems whose configuration may change in time, which 
we call live systems. 
We understand the birth (or arrival) of new components as an input to the system and use the input-to-state stability framework to study stability of live systems.

If the map $q$ in \eqref{eq:Configuration-change} describing the change of configuration, depends on the state of a system, we need to go beyond the paradigm of impulsive systems, and consider live systems as hybrid systems (see \cite{GST12})  changing their configuration at impulse times. 

Having developed a foundation for live systems theory, we will have a rich framework to formulate and analyze new approaches for control. 

One of the applications of live systems theory is the development of \emph{plug-and-play control} methods that allow the controllers to work properly even though some parts of the system may detach from the system or become unfunctional or, conversely, some new parts/agents may join the network. In particular, one could consider the synchronization of open multi-agent systems, distributed control, and observation over networks with varying topology, etc. 

In plug-and-play control, a system can \emph{react to the changes} of the system configuration. 
A complementary problem is controlling a system by \emph{inducing the changes} of the system configuration. 
Examples are adding drones to the drones flock, deployment of new military units to the battlefield, adding predators to counteract parasites, etc.  
One can, however, go one step further and consider \emph{self-governing systems}, which redesign themselves by adding new elements, sensors, actuators, estimators, etc. 
As an example, consider optimal allocation models, which are an advanced class of mathematical models for the modeling of life histories of living organisms; see \cite{MiK14} and the references therein.
In a basic setting of these models, one considers a plant consisting of a fixed number of compartments and assumes that the plant can control itself to maximize fitness.
These models neglect the modularity, which makes them less exact and leads to false predictions \cite{Fox1992}.
By employing live systems theory, one could allow a plant to control not only the allocation of energy to existing compartments but also to control which modules have to be produced (brackets, flowers, leaves, etc.), at which time, and in which order.

\section*{References}

\bibliographystyle{abbrv}
\bibliography{Mir_LitList_NoMir,MyPublications}


\end{document}

\appendix

\section{Examples}

\subsection{Academic example}

Now assume that all $\Sigma_i$ are exponentially stable with the corresponding Lyapunov functions $V_i:X_i \to \R_+$, satisfying:

\begin{itemize}
	\item  the sandwich estimates
\begin{eqnarray}
\psi_{1}(|x_i|)\leq V_i(x_i) \leq \psi_{2}(|x_i|),\quad x_i \in X_i.
\label{eq:Sandwich-V_i}
\end{eqnarray}
for certain $\psi_1,\psi_2\in\Kinf$, which are independent on $i$.
	\item the dissipative estimate
\begin{eqnarray}
\dot{V}_i(x_i) \leq -a_i V_i(x_i),\quad x_i \in X_i.
\label{eq:Dissipative-estimate}
\end{eqnarray}
\end{itemize}

Furthermore, we assume that $a:=\min_i{a_i}>0$, i.e., there is a certain minimal decay rate for all Lyapunov functions.

Define 
\begin{eqnarray}
V(x(t)) = \sum_{i=1}^{n(i)}\frac{\omega}{a_i}V_i(x_i).
\label{eq:LF-composite}
\end{eqnarray}
The dissipation inequality for $V$ reads as
\begin{eqnarray}
\dot{V}(x(t)) = \sum_{i=1}^{n(i)}\frac{\omega}{a_i}\dot{V}_i(x_i)\leq -\omega V(x(t)).
\label{eq:LF-composite-dissipation}
\end{eqnarray}
By definition of a Lyapunov function, at the space-changing times, we have the following:
\begin{eqnarray}
V(x(t^+)) = V(x(t)) + \frac{\omega}{a_i}V_{i+1}(u(t)),
\label{eq:LF-composite-increase-1}
\end{eqnarray}
which we can estimate by 
\begin{eqnarray}
V(x(t^+)) \leq V(x(t)) + \frac{\omega}{a_i} \psi_2(|u(t)|) \leq V(x(t)) + \frac{\omega}{a} \psi_2(\|u\|_\infty),
\label{eq:LF-composite-increase-2}
\end{eqnarray}
where we assume that $a:=\min_i{a_i}>0$.

\subsection{A simple population model}

Simplest population models assume that the state of the population is described by the total biomass of the population, say $N(t) \in\R$. One of the basic models, due to Verhulst, states that the total biomass changes according to the law
\[
 \dot{N} = aN - b N^{2},
\]
where $a>0$ describes the (constant) growth rate of the population provided all the resources are abundant (and hence there is no competition between species). 

More complicated models of populations are based on PDEs and involve the description of the population as a density function over the habitat.
In other words, all such models deal with the dynamics of certain aggregated characteristics of the populations.

Using the systems of a variable dimension, we can describe much more.

One can think of another kind of model.
Let each individual is described by two parameters: body mass $x_i$ and age $a_i$.
Assume that there is a constant rate of mass loss due to the need to maintain the existence of an individual: $k x_i$.
Furthermore, we assume that the rate of grazing depends on the number of individuals, e.g., in a way that:
\begin{eqnarray}
\dot{x}_i(t) = \frac{b}{n(t)} x_i(t) - kx_i(t),\quad i = 1,\ldots, n(t).
\label{eq:Population_dynamics}
\end{eqnarray}

One can add the following axioms:
\begin{itemize}
	\item Each individual has a fixed lifespan $[0,T]$.
	\item Each individual divides when it reaches a fixed body mass $s$. Then an individual divides into two parts (equal or nonequal).
\end{itemize}

\subsection{Spatial structures: A cell over the infinite string}

Assume that the habitat is a one-dimensional string, and a cell is put onto this habitat.

The cell grows optimally if there are no other cells in the vicinity of the cell. 
The cell can move to the region where there are not that many cells.

When the cell reaches a certain size, it divides itself into two cells that are close to each other.

The idea is to model this system's dynamics and study the pattern formation in such simple systems.

For example, can we generate a spatially invariant system in this way?

A modification: we can start with a spatially invariant distribution and then see how this distribution will change over time.

For this system, each cell is characterized by two parameters:
\begin{itemize}
	\item the mass of a cell $m$.
	\item the coordinate of the cell $x$. 
\end{itemize}

We may assume that there is $a>0$ so that
\begin{eqnarray}
\dot{m}_i = am_i,
\end{eqnarray}
while $m_i \in [0,M]$, where $M$ is the maximal mass of an individual cell.

We can assume that 
\begin{eqnarray}
\dot{x}_i = b f(x),
\end{eqnarray}
where $f(x) \in \{1,0,-1\}$, and tells the direction in which the cell should move.

This system can be considered on the infinite string and on the finite string.
As the right-hand side is discontinuous, already the well-posedness analysis is nontrivial.

\subsection{A sample application}

In the simplest biological models (exponential and logistic growth of population, Lotka-Volterra model), the variables represent the total number of the species (or the total mass of the species).
Here we would like to describe the biological population more precisely.

Assume that the amount of the food at time $t$ is given by $N(t)$. Let $x_i(t)$ be the body mass of the $i$-th individual. 
Assume that the grazing rate of the $i$-th individual is $c\sigma(N(t))x_i(t)$.

The dynamics of the system are given by:
\begin{eqnarray*}
\dot{N}(t) &=& a N(t) - \sum_{i=1}^n c \sigma(N(t))x_i(t)\\
\dot{x}_i(t) &=& k c \sigma(N(t))x_i(t) - ax_i(t).
\end{eqnarray*}
Here $k\in(0,1]$ is the energy conversion efficiency.
The term $ax_i(t)$ describes the losses of the mass of the organism.

Furthermore, we assume that if the organism reaches the body mass $x^*$, it is divided into two parts (equal or nonequal).
At this moment, the dimension of the system changes.

There are several problems. With such a point of view, it is hard to expect the existence of any steady state, as there is always a birth-death process.

Also, there is a problem with comparing the states after the change of a dimension: they belong to the various state spaces and cannot be added or subtracted. 

One can think about the steady state of the total mass or some other integral characteristic of the population, which can be defined for any dimension of the state space.
This will be a kind of norm, but for the comparison of the elements in different spaces.
For example, we can think about the convergence of the norm of the state to the steady state.

\subsection{Stabilization with unknown dimension}

 Assume that the system's dynamics are known, but the dimension is unknown.
	For example, if we have a swarm of agents, and we know the dynamics of each agent and the type of coupling between agents, but we do not know, how many agents are there.
	In this case, we can model the network for each particular number of agents, but we do not know, what the number exactly is.
	
	The dimension of the problem itself becomes a parameter, and if we aim to stabilize this system, then we have to design an adaptive controller which will stabilize the system for any admissible number of states.
	Have these problems been studied (e.g., in the identification theory - it seems that there also the objects of an unknown dimension have been studied)?
	
 One could obtain Lyapunov characterization for the UISS property (probably with the existence of a sequence of Lyapunov functions with uniformly bounded decay and boundedness rates.

\section{Problems which can be posed within the paradigm of systems with variable state space}

Here is a list of significant problems that lead to the control of systems of a variable structure.

\subsection{Control by adding new states}
\label{sec:Control_new_states}

One can consider completely new types of inputs: inputs, which change the state space or the whole structure of the system.
There are several problems of this kind:
\begin{itemize}
	\item Adding new agents to the systems, e.g.
	\begin{enumerate}
		\item Adding new predators to compensate for the number of preys. Predators can be added, e.g., from the boundary of the domain where the preys live. One can pose the problems where to add the predators
		\item Adding new policemen to decrease and stabilize the number of crimes or terrorist attacks.
		\item Deployment of new military units to the battlefield
		\item Control of the number of cashpoints in the supermarkets.
	\end{enumerate}
	\item Develop systems, which redesign themselves by adding sensors, actuators, and their placement, design of observers, estimators, delay compensators, etc. In this case, the system obtains the freedom to perform a part of the job, which is currently done by the engineers.
\end{itemize}

\subsection{ODEs of a variable dimension and PDEs}

Some interesting questions arise:
\begin{itemize}
	\item Comparison: vsODEs versus PDEs
	
	\item For example, here, one could talk about the social systems, modeling of a crowd, and mean-field theory, see Section~\ref{sec:Possible_Application_areas}.
	
		\item PDEs with a moving boundary: Stefan's problem.
	
	\item Social systems, modeling of a crowd.
	E.g., see 'Cyber-physical Modeling and Control of Crowd of Pedestrians: A Review and New Framework', \cite{CCS15, CCS16}.
	
	In this paper, there were mentioned several levels of modeling:
	
	\begin{itemize}
		\item Micro-scale--Ordinary Differential Equation (ODE).
	
		When the density of pedestrians is low, each pedestrian
can move freely, and interactions among pedestrians can be
modeled using the \textbf{framework of social forces} (initiated in \cite{HeM95, HFV00}).

		\item B. Macro-scale--Partial Differential Equation (PDE)
		
		\item C. Meso-scale--Integral Differential Equation (IDE)
	\end{itemize}
	
		Combine the ODEs of a variable dimension with the framework of social forces would be a good idea.
	
	\item mean field theory, mean field approximations.
	
\end{itemize}

\subsection{Life histories of the living organisms}

Optimal allocation models are the most advanced class of mathematical models for the modeling of life histories of living organisms; see \cite{MiK14} and the references therein.
A basic setting of these models is that one considers a plant consisting of a fixed number of compartments and assumes that the plant can control itself to maximize fitness.
In the optimal allocation models, one neglects the modularity, which makes the models less exact and leads to false predictions \cite{Fox1992}.

One could use the idea in Section~\ref{sec:Control_new_states} that a plant can control not only the allocation of energy to existing compartments but also to control which modules have to be produced, at which time, and in which order.

This makes the systems of ODEs with a variable number of equations a natural framework for modeling of the modular structure of plants.

Since we have some kinds of hybrid systems with variable dimensions, interconnected with PDEs and in addition deeply connected with biology, one needs to talk about this project with the great guys in hybrid systems, PDEs, interconnected systems and in biology.

\subsection{Population-Environment interactions}

An interesting modeling approach could be the PDE-vsODE cascades. They can be used for:
\begin{itemize}
	\item modeling of the populations of marine organisms. The organisms can be modeled via vsODEs and the sea/ocean can be modeled via PDEs.
	\item population of land organisms with a model of a plant population, which is modeled by a PDE.
	\item population of microorganisms living in a space with nutrients.
\end{itemize}
	
In some sense, the PDEs are modeling the environment, in which the organisms, modeled by ODEs will live.

Thus the above ideas provide us with the following:
\begin{itemize}
	\item vsODEs as a framework to model the processes of birth and death, creation of modular structures in plants, etc.
	\item Optimal growth models give us a way to formalize basic principles of biology in terms of math.
	\item PDEs at the same time remain as proper models for the environment
	\item vsODEs are the hybrid systems. During birth and death, the system's right-hand side changes instantaneously. These changes should be described by some functions, which will be a part of the model of the population (ecosystem).
\end{itemize}

\subsection{Synchronisation (consensus) of a variable number of agents}

Problems which can be studied in this respect:
\begin{itemize}
	\item Adding new agents. Will the synchronization work? What can be the speed of adding the agents so that the algorithms still work?
	\item Destruction of the agents: what if the leader will be destroyed?
\end{itemize}

Reading plan:
\begin{itemize}
	\item Read the literature about the synchronization.
	\item What are the nice surveys about synchronization?
\end{itemize}

\section{Application areas for control systems with a variable state-space}
\label{sec:Possible_Application_areas}

{\color{red}
Literature is collected in the directory:
\begin{verbatim}
C:\MeineKollektion\___Papers\__Variable Dimension
\end{verbatim}
}

\begin{enumerate}
	\item Various types of hybrid systems:
\begin{itemize}
	\item Hybrid systems: \cite{GST12}, \cite{GHT04}: the hybrid systems via Teel's approach have been introduced and motivated.
	\item Cyber-physical systems: \cite{KiK12}. Basically, 3 of 7 main challenges identified in \cite{LAE17} are about cyber-physical systems. 
	\item Networked control systems
\end{itemize}
		
	\item Spatially invariant systems. 

\item Biology	
	\begin{itemize}
	\item Modular biology: \cite{HHL99}, \cite{WPC07}
	\item Network biology: \cite{BaO04}, \cite{PuW09}
	\item Synthetic biology
	\item Optimal allocation models: the plants \cite[p. 489]{Fox1992}
	%
%
	
	\end{itemize}

	\item PDEs with a moving boundary: Stefan's problem.
	
	\item Social systems, modeling of a crowd.
	E.g. see 'Cyber-physical Modeling and Control of Crowd of Pedestrians: A Review and New Framework', \cite{CCS15, CCS16}.
	
	In this paper, there were mentioned several levels of modeling:
	
	\begin{itemize}
		\item Micro-scale--Ordinary Differential Equation (ODE).
	
		When the density of pedestrians is low, each pedestrian
can move freely, and interactions among pedestrians can be
modeled using the \textbf{framework of social forces} (initiated in \cite{HeM95, HFV00}).

		\item B. Macro-scale--Partial Differential Equation (PDE)
		
		\item C. Meso-scale--Integral Differential Equation (IDE)
	\end{itemize}
	
		Combining the ODEs of a variable dimension with the framework of social forces would be a good idea.
	
	\item mean field theory, mean field approximations.
	\item Cellular automata:

\begin{verbatim}
In the 1960s, cellular automata were studied as a particular type of dynamical 
system and the connection with the mathematical field of symbolic dynamics 
was established for the first time.
\end{verbatim}	
\end{enumerate}

\section{Plan of the paper on ODEs with a variable dimension}

\begin{itemize}
	\item Define the modeling concepts:
	 hybrid time domain, solution, dimension of the solution
	\item Examples of ODEs of a variable dimension. Just modeling and simulation.
	An example of a system with a dimension that grows to infinity in finite time.
	At this stage no stability analysis and no formal proof of existence and uniqueness.	
	\item Proof of stability results 
	\item Stability concepts: UGS, UGAS, Lyapunov functions
	\item Proofs of basic stability results: the existence of a Lyapunov function implies UGAS.
	\item Example of stability analysis.
	\item Outlook: some non-ODE examples of variable state space systems, e.g., PDEs with moving boundaries
\end{itemize}

What is needed now:
\begin{itemize}
	\item a set of key research and innovation challenges
	\item a set of key applications.
\end{itemize}

\end{document}

\section{Live systems}

In this section, we introduce the concept of live systems, that we understand as the systems with a variable state space.

Consider an index set $\hat{S}$. We will call it \emph{configuration set}, and its elements \emph{configurations}. 
We associate with each configuration $Q \in\hat{S}$ a control system $\Sigma_Q$ whose dynamics are given by

Let $S$ be a (finite or infinite) index set that labels all possible components of the network.
Let for each $i\in S$ a normed linear space $X_i$ endowed with the norm $\|\cdot\|_{X_i}$ be 
the state space of the $i$-th subsystem.

Let us fix a subset $\hat{S}$ of all finite nonempty subsets of $S$. We will call $\hat{S}$ the \emph{configuration set}. 
Let $Q \in\hat{S}$. In what follows, we denote by $(x_j)_{j \in Q}$ the vector consisting of $x_j \in X_j$ for $j\in Q$. 
Define the vector space
\begin{equation}
\label{eq:X_Q}
 \hspace{-2mm} X_Q := \bigtimes_{j\in Q}X_j.
\end{equation}
and endow it with a certain norm $\|\cdot\|_{X_Q}$ making $X_Q$ a (real) normed linear space. 

\begin{remark}
\label{rem:Labeling} 
In this work, we endow for any $j \in S$ each $y \in X_j$ with the label $j$ to obtain a pair $(j,y)$, and each $y \in X_Q$, 
we associate with a pair $(Q,y)$. However, we drop the labels of the elements of $ X$ to simplify the notation.
\end{remark}

\begin{definition}
\label{def:set of state spaces} 
The \emph{state set} $ X$ is defined by 
\begin{eqnarray}
 X:=\bigcup_{Q\subset \hat{S}} X_Q.
\label{eq:state-set}
\end{eqnarray}
In view of Remark~\ref{rem:Labeling}, each element of $ X$ \q{knows} its configuration. More precisely, the union 
in \eqref{eq:state-set} is disjoint in the sense that for any $x \in  X$ there is a unique $Q \in \hat{S}$ such that $x \in X_Q$. 
Hence, we can introduce the map $\|\cdot\|_{ X}: X \to \R_+$, given by
\[
\|x\|_{ X}:= \|x\|_{X_Q},\quad x\in X.
\]
We call this map (by abuse of terminology) a \emph{pseudonorm} on $ X$.
\end{definition}

As we cannot add two elements of $ X$, there is no triangle property for the map $\|\cdot\|_{ X}$. However, we still have the following:
\begin{proposition}
\label{prop:pseudonorm-properties} 
The map $\|\cdot\|_{ X}$ satisfies the following properties:
\begin{itemize}
	\item $\|x\|_{ X}\geq 0$ for all $x \in  X$.
	\item $\|x\|_{ X}= 0$ if and only if $x$ is a zero element in $X_Q$ for some $Q\in \hat{S}$.
	\item For each $x \in X$ and any $\lambda \in\R$ it holds that $\lambda x \in  X$, and 
\begin{eqnarray}
\|\lambda x \|_{ X} = |\lambda| \|\lambda x \|_{ X}.
\label{eq:Homogeneity}
\end{eqnarray}
\end{itemize}
\end{proposition}

\begin{proof}
The first two properties are evident. To see the latter fact, note that for any $x \in  X$ there is $Q \in \hat{S}$ such that 
$x \in X_Q$. Thus, $\lambda x \in X_Q \subset  X$, and 
\[
\|\lambda x\|_{ X} = \|\lambda x\|_{X_Q} = |\lambda| \| x\|_{X_Q}= |\lambda| \| x\|_{ X}.
\]
\end{proof}

\mir{Can we introduce a topology on $ X$? Can $\|\cdot\|_{ X}$ generate such a topology? 
Why do we need linear structure? After all, we do nonlinear control.

We possibly can consider also the couplings of live systems. 
Will a couplings of such systems be a live system?

How can we formalize it?

We will have all the classic results from the ISS theory extended to live systems: 
ISS Superposition principle, Lyapunov methods, small-gain methods.

}

For each time instant $t$, we denote the configuration of the system at time $t$ by $I(t) \in \hat{S}$.
The state space of the system at time $t$ is thus given by $X_{I(t)} \in  X$.

Our central assumption is
\begin{ass}
\label{ass:set of state spaces} 
The changes of the state space, or the impulsive changes of the state of the system occur only at certain time instants given by the increasing infinite sequence $\T:=(t_k)_{k\in\N}$ without accumulation points. We call $\T$ \emph{impulse time sequence}.

In particular, the configuration $I(\cdot)$ remains constant between the impulse times, that is $I(t) = I(t_k)$ for $t\in[t_k,t_{k+1})$.
\end{ass}

Let for all $t \in\R_+\backslash \T$ the dynamics of the system be given by the following equations: 
\begin{align}
\label{eq:Equations of motion}
\Sigma:\quad \dot{x}_i(t)	&= f_i(x(t),u_i(t)),\quad i \in I(t),\quad t>0.
\end{align}
Here $f_i: X \tm \R^{m_i} \to X_i$ is a map that is defined on the whole state set. 
We assume that $u_i \in L_\infty(\R_+,\R^{m_i})$, for all $i\in S$, and we define the total input to the system $\Sigma$ as 
$u:=(u_i)$, with $\|u\|_\Uc:=\sup_{i\in S}\|u_i\|_\infty$.
We assume also for the simplicity of notation that all the signals $u_i$ are right-continuous.

\begin{remark}
\label{rem:f_i-dependence-configuration} 
As we implicitly label the states of $X_Q$ for any configuration $Q \in\hat{S}$, $f_i$ depends both on the configuration of the system, and the state of the system in this configuration.
\end{remark}

As for any $k\in\N$ and any $t \in (t_k,t_{k+1})$, it holds that $x(t) \in X_{I(t_k)}$, the dynamics of the system $\Sigma$
can be equivalently represented on $(t_k,t_{k+1})$ by 
\begin{align}
\label{eq:Equations of motion-config}
\dot{x}(t)	&= f_{I(t_k)}(x(t),u(t)) :=(f_i(x(t),u_i(t)))_{i \in I(t_k)},\quad t\in(t_k,t_{k+1}).
\end{align}
Now $f_{I(t_k)}$ acts on $X_{I(t_k)} \tm \Uc$, and thus \eqref{eq:Equations of motion-config} is a usual ODE representing the dynamics of the overall system on $(t_k,t_{k+1})$. We will denote the system $\Sigma$ in the configuration $I(t)$ by $\Sigma_{I(t)}$. The state space of this configuration is $X_{I(t)}$. 
We understand the solutions of \eqref{eq:Equations of motion-config} in the sense of Caratheodory. 

\begin{ass}
\label{ass:set of state spaces} 
We suppose that $\Sigma_Q$ is well-posed for any configuration $Q$. That is, for each initial condition $x_0 \in X_{I(t_k)}$ and any $u\in\Uc$, there exists a unique maximal solution of \eqref{eq:Equations of motion-config} with $x(t_k)=x_0$ on $[t_k,\tau)$ for some $\tau\in(t_k,t_{k+1})$. 
We denote this solution as $\hat\phi(\cdot,t_k,x,u)$. 
\end{ass}

At impulse times, the configuration may change as some subsystems may leave the system, and the others may enter the system. 
We denote by $D(t_k) \subset I(t_k^-)$ the set of indices of all subsystems that are leaving at time $t_k$, 
and we denote by $B(t_k)$ the indices of the subsystems that enter the system at time $t_k$. 

\begin{ass}
\label{ass:set of state spaces} 
We assume that 
\[
B(t_k) \cap I(t_k^-) = \emptyset.
\] 
Thus, at impulse times $t_k$, $k\in\N$ we have that 
\[
I(t_k) = \big(I(t_k^-)\cup B(t_k)\big) \backslash D(t_k).
\]
We assume that $I(t_k) \in\hat{S}$ for all $k\in\N$.
\end{ass}

At impulse time $t_k$, the subsystems with indices belonging to $B(t_k)$ are entering the network $\Sigma$. We treat their initial conditions  as an input to the system:
\begin{align}
\label{eq:Impulsive-dynamics-new-states}
x_i(t_k) = u_i(t_k),\quad i \in B(t_k).
\end{align}
The states that remain in the network may jump instantaneously at time $t_k$:
\begin{align}
\label{eq:Impulsive-dynamics-old-states}
x_i(t_k) = g_i(x(t_k^-),u(t_k^-)),\quad i \in I(t_k^-)\backslash D(t_k).
\end{align}

We treat the initial states of new subsystems entering or leaving the network as an (impulsive) input to the system, similarly to instantaneous changes in the states of subsystems that are within the network. 

Now pick any finite initial configuration $I_0 \subset S$, any initial condition $x \in X_{I_0}$, any sequence of impulse times $\T$, any input $u\in \Uc$. We define the solution $\phi(\cdot,x,u)$ of $\Sigma$ in the following way:

Since $\Sigma$ is well-posed in the initial configuration, its maximal solution corresponding to $x,u$ has the form $\hat\phi(\cdot,0,x,u)$. If this maximal solution is defined over a subinterval $[0,\tau)$ of $[0,t_1)$ with $\tau<t_1$, then the maximal existence time for the solution of $\Sigma$ corresponding to $x,u$ is $t_m(x,u):=\tau$, and 
\[
\phi(t,x,u):=\hat\phi(t,0,x,u),\quad t \in [0, t_m(x,u)).
\]
Otherwise, we set 
$\phi(t,x,u):=\hat\phi(t,0,x,u)$, $t \in [0, t_1)$, and 
$\phi(t_1,x,u)$ is defined by the relations \eqref{eq:Impulsive-dynamics-new-states} and \eqref{eq:Impulsive-dynamics-old-states}. 

Now the system $\Sigma$ is in a new configuration $I(t_1)$. It is also well-posed by assumption, and there is a unique solution 
$\hat\phi(\cdot,t_1,\phi(t_1,x,u),u(\cdot+t_1))$ defined on some nonempty subinterval $[t_1,\tau_2)$ of $[t_1,t_2)$. If $\tau_2 <t_2$, then we set $t_m(x,u):=\tau_2$ and define 
\[
\phi(t,x,u):=\hat\phi\big(t-t_1,t_1,\phi(t_1,x,u),u(\cdot+t_1)\big), \quad t\in [t_1,\tau_2).
\]
Otherwise, if $\tau_2=t_2$, then 
\[
\phi(t,x,u):=\hat\phi\big(t-t_1,t_1,\phi(t_1,x,u),u(\cdot+t_1)\big), \quad t\in [t_1,t_2).
\]
Doing this procedure repeatedly, we obtain the map
\[
\phi:D_{\phi} \to  X, \quad D_{\phi}\subseteq \R_+ \times  X \times \Uc,
\]
such that for all $(x,u)\in  X \tm \Uc$ there is $t_m=t_m(x,u)\in (0,+\infty]$ such that
\[
D_{\phi} \cap \big(\R_+ \times \{(x,u)\}\big) = [0,t_m)\tm \{(x,u)\} \subset D_{\phi}.
\]
We call $\phi$ the \emph{flow map}. 

We state some elementary properties of the flow map
\begin{proposition}
\label{prop:Properties-Flow-map} 
The flow map $\phi$ satisfies the following properties:
\begin{sysnum}
    \item\label{axiom:Identity} \emph{The identity property:} for every $(x,u) \in  X \times \Uc$
          it holds that $\phi(0, x,u)=x$.
\index{causality}
    \item \emph{Causality:} for every $(t,x,u) \in D_\phi$, for every $\tilde{u} \in \Uc$, such that $u(s) =
          \tilde{u}(s)$ for all $s \in [0,t]$ it holds that $[0,t]\tm \{(x,\tilde{u})\} \subset D_\phi$ and $\phi(t,x,u) = \phi(t,x,\tilde{u})$.
\index{property!cocycle}
        \item \label{axiom:Cocycle} \emph{The cocycle property:} for all
                  $x \in  X$, $u \in \Uc$, for all $t,h \geq 0$ so that $[0,t+h]\tm \{(x,u)\} \subset D_{\phi}$, we have
\[
\phi\big(h,\phi(t,x,u),u(t+\cdot)\big)=\phi(t+h,x,u).
\]
\end{sysnum}

\end{proposition}

\begin{proof}
All properties follow from the fact that the flow maps for particular configurations satisfy these properties (as ODE systems), and also by construction of the flow $\phi$.
\end{proof}

Overall, the live systems as defined in this section have all the usual properties of the flow map, up to the continuity of the flow map, and up to the fact that the state lives in a state set. 
This allows to analyze the live system $\Sigma$ using the strategy used for more classic types of control systems.


\section{Older stuff}

Consider a class of time-invariant control systems $\Sc:=\{\Sigma_n:=(X_n,\phi_n,\Uc_n):\ n\in S\}$, where $S$ is an index set.
The set $\Sc$ defines the set of components of which the system with the time-varying state may consist.

We define also for convenience the numbered union $\tilde{X}$ of state spaces of components: 
\begin{eqnarray}
\tilde{X}:=\cup_{i\in S}\{i\}\tm X_i,
\label{eq:Union-of-state-spaces}
\end{eqnarray}
and a quantity $\|\cdot\|_{\tilde{X}}$, which we call (by abusing the terminology a bit) a 'pseudonorm' on $\tilde{X}$ and defined for any $(i,x) \in\tilde{X}$ by 
\begin{eqnarray}
\|(i,x)\|_{\tilde{X}}:=\|x\|_{X_i},
\label{eq:Seminnorm-in-the-union}
\end{eqnarray}
A pair $(i,x) \in\tilde{X}$ can be understood as an object, consisting of the type of the element and its value.

We would like to analyze a variable state system, consisting of a time-varying subset of systems from the set $\Sc$.
We define the set of state spaces $ X$ as the set, consisting of all finite Cartesian products of the set $(X_i)_{i\in S}$.
In other words, any $X \in  X$ has the form $X=X_{k_1}\tm X_{k_2}\tm\ldots\tm X_{k_p}$ for a certain $p\in\N$ and coefficients $k_1,\ldots,k_p \in S$.
We define the norm of $x = (x_1,\ldots,x_p)\in X$ by $\|x\|_X:=\sum_{i=1}^p\|x_i\|_{X_i}$.

We assume that the state space of the system changes at certain time instants $(t_i)_{i\in\N}$, which is a monotone increasing to infinity sequence of times (in particular, there are no accumulation points in this sequence).

We assume that all the agents of the network are independent and do not communicate with each other.

The evolution of the system is uniquely defined by:
\begin{enumerate}
	\item the initial state space $X(0) \in  X$ and state $x \in X(0)$
	\item time sequence $(t_i)_{i\in\N}$, defining at which times new agents are joining the system $\Sigma$.
	\item discrete-time signal $(v,u):\R_+\to \tilde{X}$, defining agents that will be added: at each time $t_i$ a new agent given by a system $\Sigma_{v(t_i)} \in \Sc$ with the initial state $u(t_i) \in X(v(t_i))$ joins the system.
\end{enumerate}
For the map $(v,u):\R_+\to \tilde{X}$, we define the following quantity, which we by abusing the terminology will call a norm
\[
\|(v,u)\|_\infty:=\sup_{i\in \R_+}\|(v(t_i),u(t_i))\|_{\tilde{X}} =\sup_{i\in \R_+}\|u(t_i)\|_{X(v(t_i))}.
\]

The state space at moment $t_i$ changes as
\begin{eqnarray}
X(t_i^+):=X(t_i)\tm X_{v(t_i)},
\label{eq:State-space-dim-increase}
\end{eqnarray}

The state changes at time $t_i$ according to 
\begin{eqnarray}
x(t^+_i):=(x(t_i),u(t_i)).
\label{eq:State-dim-increase}
\end{eqnarray}

As the systems which are a part of the network, do not interact with each other, 
the state of the system at time $t \in (t_i,t_{i+1})$ is given by
\begin{eqnarray}
\phi(t,x,v,u) = \big(\phi_{X(0)}(t,x), \phi_{v(t_1)}(t-t_1,u(t_1)),\ldots, \phi_{v(t_i)}(t-t_i,u(t_i))\big).
\label{eq:State-of-time-varying-system-at-time-t}
\end{eqnarray}

\mir{$\phi_{X(0)}(t,x)$, $\Uc$, $\Vc$ are not defined.}

\begin{definition}
\label{def:UISS-type-of-agents}
Let the sequence of space-changing times $(t_i)_{i\in\N}$ be given.
System $\Sigma$ with a variable state space is called \emph{(uniformly)  input-to-state stable
(ISS)}, if there exist $\beta \in \KL$ and $\gamma \in \Kinf$ 
such that for all initial state space $X(0)$, for any initial state  $ x \in X(0)$, any inputs $u\in \Uc$ and $v\in \Vc$ and for all $ t\geq 0$ it holds that
\begin {equation}
\label{eq:iss_sum-time-varying}
\| \phi(t,x,v,u) \|_{X(t)} \leq \beta(\| x \|_{X(0)},t) + \gamma( \|u\|_{\Uc}).
\end{equation}
\end{definition}

The uniformity in the above definition is to be understood in the sense that the functions $\beta$ and $\gamma$ in the rhs of 
\eqref{eq:iss_sum-time-varying} do not depend on the input $v$, i.e. on the type of the systems which are added.
At the same time, $\beta$ and $\gamma$ do depend on the sequence of space-changing times $\T$.

\begin{definition}
\label{def:UISS-for-VSS}
Let $\T$ be the set of sequences of space-changing times.
System $\Sigma$ with a variable state space is called \emph{(uniformly)  input-to-state stable
(ISS) with respect to $\T$}, if there exist $\beta \in \KL$ and $\gamma \in \Kinf$ 
such that for space-changing time sequences in $\T$, all initial state spaces $X(0)$, for any initial state  $ x \in X(0)$, any inputs $u\in \Uc$ and $v\in \Vc$ and for all $ t\geq 0$ the estimate \eqref{eq:iss_sum-time-varying} holds.
\end{definition}

As the agents are independent, it is clear, that a necessary condition for a time-varying network to be ISS is the asymptotic stability of the individual agents. It is however not a sufficient condition.


\section{Variable number of independent systems}

Let $(X_i)_{i\in S}$ be a family of normed linear spaces with the corresponding norms $\|\cdot\|_{X_i}$.
Consider a class of time-invariant control systems $\Sc:=\{\Sigma_n:=(X_n,\phi_n,\Uc_n):\ n\in S\}$, where $S$ is an index set.
The set $\Sc$ defines the set of components of which the system with the time-varying state may consist.

We define also for convenience the numbered union $\tilde{X}$ of state spaces of components: 
\begin{eqnarray}
\tilde{X}:=\cup_{i\in S}\{i\}\tm X_i,
\label{eq:Union-of-state-spaces}
\end{eqnarray}
and a quantity $\|\cdot\|_{\tilde{X}}$, which we call (by abusing the terminology a bit) a 'pseudonorm' on $\tilde{X}$ and defined for any $(i,x) \in\tilde{X}$ by 
\begin{eqnarray}
\|(i,x)\|_{\tilde{X}}:=\|x\|_{X_i},
\label{eq:Seminnorm-in-the-union}
\end{eqnarray}
A pair $(i,x) \in\tilde{X}$ can be understood as an object, consisting of the type of the element and its value.

We would like to analyze a variable state system, consisting of a time-varying subset of systems from the set $\Sc$.
We define the set of state spaces $ X$ as the set, consisting of all finite Cartesian products of the set $(X_i)_{i\in S}$.
In other words, any $X \in  X$ has the form $X=X_{k_1}\tm X_{k_2}\tm\ldots\tm X_{k_p}$ for a certain $p\in\N$ and coefficients $k_1,\ldots,k_p \in S$.
We define the norm of $x = (x_1,\ldots,x_p)\in X$ by $\|x\|_X:=\sum_{i=1}^p\|x_i\|_{X_i}$.

We assume that the state space of the system changes at certain time instants $(t_i)_{i\in\N}$, which is a monotone increasing to infinity sequence of times (in particular, there are no accumulation points in this sequence).

We assume that all the agents of the network are independent and do not communicate with each other.

The evolution of the system is uniquely defined by:
\begin{enumerate}
	\item the initial state space $X(0) \in  X$ and state $x \in X(0)$
	\item time sequence $(t_i)_{i\in\N}$, defining at which times new agents are joining the system $\Sigma$.
	\item discrete-time signal $(v,u):\R_+\to \tilde{X}$, defining agents that will be added: at each time $t_i$ a new agent given by a system $\Sigma_{v(t_i)} \in \Sc$ with the initial state $u(t_i) \in X(v(t_i))$ joins the system.
\end{enumerate}
For the map $(v,u):\R_+\to \tilde{X}$, we define the following quantity, which we by abusing the terminology will call a norm
\[
\|(v,u)\|_\infty:=\sup_{i\in \R_+}\|(v(t_i),u(t_i))\|_{\tilde{X}} =\sup_{i\in \R_+}\|u(t_i)\|_{X(v(t_i))}.
\]

The state space at moment $t_i$ changes as
\begin{eqnarray}
X(t_i^+):=X(t_i)\tm X_{v(t_i)},
\label{eq:State-space-dim-increase}
\end{eqnarray}

The state changes at time $t_i$ according to 
\begin{eqnarray}
x(t^+_i):=(x(t_i),u(t_i)).
\label{eq:State-dim-increase}
\end{eqnarray}

As the systems which are a part of the network, do not interact with each other, 
the state of the system at time $t \in (t_i,t_{i+1})$ is given by
\begin{eqnarray}
\phi(t,x,v,u) = \big(\phi_{X(0)}(t,x), \phi_{v(t_1)}(t-t_1,u(t_1)),\ldots, \phi_{v(t_i)}(t-t_i,u(t_i))\big).
\label{eq:State-of-time-varying-system-at-time-t}
\end{eqnarray}

\mir{$\phi_{X(0)}(t,x)$, $\Uc$, $\Vc$ are not defined.}

\begin{definition}
\label{def:UISS-type-of-agents}
Let the sequence of space-changing times $(t_i)_{i\in\N}$ be given.
System $\Sigma$ with a variable state space is called \emph{(uniformly)  input-to-state stable
(ISS)}, if there exist $\beta \in \KL$ and $\gamma \in \Kinf$ 
such that for all initial state space $X(0)$, for any initial state  $ x \in X(0)$, any inputs $u\in \Uc$ and $v\in \Vc$ and for all $ t\geq 0$ it holds that
\begin {equation}
\label{eq:iss_sum-time-varying}
\| \phi(t,x,v,u) \|_{X(t)} \leq \beta(\| x \|_{X(0)},t) + \gamma( \|u\|_{\Uc}).
\end{equation}
\end{definition}

The uniformity in the above definition is to be understood in the sense that the functions $\beta$ and $\gamma$ in the rhs of 
\eqref{eq:iss_sum-time-varying} do not depend on the input $v$, i.e. on the type of the systems which are added.
At the same time, $\beta$ and $\gamma$ do depend on the sequence of space-changing times $\T$.

\begin{definition}
\label{def:UISS-for-VSS}
Let $\T$ be the set of sequences of space-changing times.
System $\Sigma$ with a variable state space is called \emph{(uniformly)  input-to-state stable
(ISS) with respect to $\T$}, if there exist $\beta \in \KL$ and $\gamma \in \Kinf$ 
such that for space-changing time sequences in $\T$, all initial state spaces $X(0)$, for any initial state  $ x \in X(0)$, any inputs $u\in \Uc$ and $v\in \Vc$ and for all $ t\geq 0$ the estimate \eqref{eq:iss_sum-time-varying} holds.
\end{definition}

As the agents are independent, it is clear, that a necessary condition for a time-varying network to be ISS is the asymptotic stability of the individual agents. It is however not a sufficient condition.

Now assume that all $\Sigma_i$ are exponentially stable with the corresponding Lyapunov functions $V_i:X_i \to \R_+$, satisfying:

\begin{itemize}
	\item  the sandwich estimates
\begin{eqnarray}
\psi_{1}(|x_i|)\leq V_i(x_i) \leq \psi_{2}(|x_i|),\quad x_i \in X_i.
\label{eq:Sandwich-V_i}
\end{eqnarray}
for certain $\psi_1,\psi_2\in\Kinf$, which are independent on $i$.
	\item the dissipative estimate
\begin{eqnarray}
\dot{V}_i(x_i) \leq -a_i V_i(x_i),\quad x_i \in X_i.
\label{eq:Dissipative-estimate}
\end{eqnarray}
\end{itemize}

Furthermore, we assume that $a:=\min_i{a_i}>0$, i.e., there is a certain minimal decay rate for all Lyapunov functions.

Define 
\begin{eqnarray}
V(x(t)) = \sum_{i=1}^{n(i)}\frac{\omega}{a_i}V_i(x_i).
\label{eq:LF-composite}
\end{eqnarray}
The dissipation inequality for $V$ reads as
\begin{eqnarray}
\dot{V}(x(t)) = \sum_{i=1}^{n(i)}\frac{\omega}{a_i}\dot{V}_i(x_i)\leq -\omega V(x(t)).
\label{eq:LF-composite-dissipation}
\end{eqnarray}
By definition of a Lyapunov function, the following holds at the space changing times:
\begin{eqnarray}
V(x(t^+)) = V(x(t)) + \frac{\omega}{a_i}V_{i+1}(u(t)),
\label{eq:LF-composite-increase-1}
\end{eqnarray}
which we can estimate by 
\begin{eqnarray}
V(x(t^+)) \leq V(x(t)) + \frac{\omega}{a_i} \psi_2(|u(t)|) \leq V(x(t)) + \frac{\omega}{a} \psi_2(\|u\|_\infty),
\label{eq:LF-composite-increase-2}
\end{eqnarray}
where we assume that $a:=\min_i{a_i}>0$.